\pgfplotsset{compat=1.12}
\def \dis{\displaystyle}
\def \R{\mathbb{R}} 
\def \N{\mathbb{N}}
\def \N0{\mathbb{N}_0}
\def \d{\delta}
\def \e{\varepsilon}
\def \W{\Omega}
\def \phi{\varphi}
\def \12{\dis\frac{1}{2}}
\def \1{\mathbbm{1}}
\def \<{\left<}
\def \>{\right>}
\def \mA{\mathcal{A}}
\def \mAze{\mathcal{A}_0^\e}
\def \Th{\mathcal{T}_h}
\def \EI{\mathcal{E}_h^I}
\def \Eh{{\mathcal E}_h}
\def \dv{\cdot\nu_{e}}
\def \mL{\mathcal{L}}
\def \mLh{\mathcal{L}_{h}}
\def \mLh{\mathcal{L}_h}
\def \mLzh{\mathcal{L}_{0,h}}
\def \grad{\nabla}
\def \lss{\lesssim}
\def \wto{\rightharpoonup}
\let\div\undefined
\DeclareMathOperator*{\div}{div}
\def \dx[#1]{\ensuremath{\operatorname{d}\!{#1}}}
\newtheorem{defn}{Definition}
\numberwithin{defn}{section}
\numberwithin{remark}{section}
\begin{document}
	
\title{$H^1$-norm error estimate for a nonstandard finite element approximation of second-order linear elliptic
	 PDEs in non-divergence form.}
 \markboth{X. FENG and S. SCHNAKE}{FINITE ELEMENT METHOD FOR NON-DIVERGENCE FORM ELLIPTIC PDEs}

\author{Xiaobing Feng\thanks{Department of Mathematics, The University of Tennessee, Knoxville, TN 37996, U.S.A. (xfeng@math.utk.edu).}
\and
	Stefan Schnake\thanks{Department of Mathematics, The University of Oklahoma, Norman, OK 73019, U.S.A. (sschnake@ou.edu).}
}

\maketitle
\date{\today}

\begin{abstract}
	This paper establishes the optimal $H^1$-norm error estimate for a nonstandard finite element 
	method for approximating $H^2$ strong solutions of second order linear elliptic PDEs in 
	non-divergence 	form with continuous coefficients. To circumvent the difficulty of lacking 
	an effective duality argument for this 	class of PDEs, a new analysis technique is introduced; 
	the crux of it is to establish an $H^1$-norm stability estimate for the finite element approximation   
	operator which mimics a similar estimate for the underlying PDE operator recently established by the 
	authors and its proof is based on a freezing coefficient technique and a topological argument. 
	Moreover, both the $H^1$-norm stability and  error estimate also hold for the linear finite element method. 
\end{abstract}

\begin{AMS}
	65N30, 65N12, 35J25
\end{AMS}
  
\section{Introduction}\label{introduction}  

This paper is concerned with  finite element approximations of the following linear elliptic PDE in non-divergence form:
\begin{align} \label{eqn1.1}
\begin{split} 
\mL u:= -A:D^2u &= f \qquad \text{ in } \W, \\ 
u &= 0 \qquad \text{ on } \partial\W, 
\end{split}
\end{align} 
where $\W\subset\R^n (n=2,3)$ is an open bounded domain, $f\in L^2(\W)$, and $A\in[C^0(\overline{\W})]^{n\times n}$ 
is uniformly positive definite.  
The above non-divergence form PDEs can be seen in several applications -- most notably from stochastic optimal control, game theory, and mathematical finance \cite{SP:Flem}.  Moreover, non-divergence PDEs relate to several second-order fully nonlinear 
PDEs such as the Hamilton-Jacobi-Bellman equation, Issac's equations, and the Monge-Amp\`ere equation \cite{AM:CG,Crandall_Ishii_Lions}.

Because of the non-divergence structure, it is not easy to develop convergent Galerkin-type methods for problem \eqref{eqn1.1}.
As expected, the inherent difficultly is that we cannot perform integration by parts on the non-divergence term $A:D^2u$.  
This issue could be avoided if $A$ is sufficiently smooth as then we could rewrite $A:D^2u$ as the sum of a divergence 
form diffusion term and a first-order advection term, namely, $-A:D^2u=-\div(A\nabla u) + \div(A) \cdot\nabla u$. However, 
when $A$ is only continuous, we cannot perform this rewriting. Due to these challenges, only a few convergent numerical methods 
have been developed so far for problem \eqref{eqn1.1} in the literature -- see  \cite{Feng2017SS,Feng2018SS,AX:WW,AX:FN,SmearsSuli,NochettoZhang16}. Many of these works aim at approximating 
the strong solution $u\in H^2(\W)\cap H_0^1(\W)$ that satisfies \eqref{eqn1.1} almost everywhere in $\overline{\W}$.  

In this paper we further study the $C^0$ finite element method proposed by Feng, Neilan, and Hennings \cite{AX:FN} 
which is defined by seeking $u_h$ in a finite element space $V_h$ on a triangular mesh $\Th$ with interior skeleton $\EI$, 
such that
\begin{align} \label{eqn1.1.1}
 \sum_{T\in\Th} \int_T -A:D^2 u_h v_h\dx[x] + \sum_{e\in\EI} \int_e [A\grad u_h\dv] v_h \dx[S] = \int_\W fv_h \dx[x]
\end{align}
for any $v_h\in V_h$.   The authors proved the well-posedness of \eqref{eqn1.1.1} in addition to stability estimate 
\begin{align} \label{eqn1.1.2}
 \| u_h \|_{H^2(\W)} \leq C\|f\|_{L^2(\W)},
\end{align}
and the optimal $H^2$-norm error estimate
\begin{align} \label{eqn1.1.3}
 \| u - u_h\|_{H^2(\W)} \leq Ch^{\min\{r+1,s\}-1}\|u\|_{H^s(\W)},
\end{align}
with $r\geq 2$ being the polynomial degree of finite element functions. Many of the other Galerkin-type methods share 
estimates similar to \eqref{eqn1.1.2} and \eqref{eqn1.1.3} \cite{Feng2017SS,AX:WW,AX:FN,SmearsSuli,Feng2018SS}. These 
estimates arise from the framework provided by the operator $\mL:H^2(\W)\to L^2(\W)$ and we note that the energy
space for the strong solution of problem \eqref{eqn1.1} is $H^2(\W)$ (or $W^{2,p}(\W)$ in general).  However, it is natural 
to ask whether optimal order error estimates can be obtained for $u-u_h$ in lower order norms such as the $H^1$ and $L^2$-norm.  Numerical tests in \cite{AX:FN} indicate that $u_h$ obtained by \eqref{eqn1.1.2} yields optimal error estimates 
in $H^1$ and $L^2$-nrom.  However, none of the existing works (\cite{Feng2017SS,AX:WW,AX:FN,SmearsSuli,Feng2018SS})
prove an optimal $H^1$-norm error estimate without assuming additional regularity to the coefficient matrix $A$.

A standard technique to obtain error estimates in lower order norms  is the well-known Aubin-Nitsche duality argument, 
which is in fact the only general tool for such a job.  
Below we motivate that the using this technique will most likely fail for \eqref{eqn1.1.1}.  Our motivation stems from \cite{Feng2017SS} where 
an IP-DG counterpart of \eqref{eqn1.1.1} was developed using the interior-penalty discontinuous Galkerin (IP-DG) framework;
namely, given an interior skeleton $\EI$ and a full skeleton $\Eh$ find $u^h$ in a DG finite element space $V^h$ of $\Th$ 
such that
\begin{align}\label{eqndg}
\begin{split}
&-\sum_{T\in\Th}\int_T (A:D^2u^h)v^h \dx[x] + \sum_{e\in\EI}\int_e[A\grad u^h\dv]\{v^h\}\dx[S] \\
&\qquad-\e \sum_{e\in\Eh}\int_e \{A\grad v^h\dv\}[u^h]\dx[S] + \sum_{e\in\Eh}\int_e \frac{\gamma_e}{h_e}[u^h][v^h]\dx[S] = \int_\W fv^h \dx[x] 
\end{split}
\end{align}
for any $v^h\in V^h$.  We refer the reader to \cite{Feng2017SS} for the detailed derivation and analysis of \eqref{eqndg}; 
here we only write down the formulation to show that \eqref{eqndg} gives three methods dependent on the choice of $\e$.  
Recall that $\e=1,0,-1$ yield the symmetrically induced (SIP-DG), incompletely induced (IIP-DG), and non-symmetrically 
induced (NIP-DG) methods respectively. The numerical tests of \cite{Feng2017SS} show that $L^2$-norm error estimates 
are not always optimal.  The sub-optimality in $L^2$ should be expected as incomplete and non-symmetric methods do not 
yield optimal $L^2$-norm error estimates even for divergence form PDEs such as $-\Delta u  = f$.  More specifically, 
a duality argument will fail because the bilinear form given by the left-hand side of \eqref{eqndg} is not symmetric 
for $\e=0$ and $\e=-1$. If a hypothetical duality technique were to be used to show optimal $H^1$-norm error estimates, 
we should expect the same sub-optimality as in the $L^2$ case for the IIP-DG and NIP-DG methods. On the other hand, the 
numerical tests show the $H^1$-norm error estimates are always optimal for any $\e=1,0,-1$.  Thus these tests suggest 
that the duality augment will probably not yield optimal $H^1$ estimates.  

To circumvent the difficulty of lacking an effective duality argument, we take a different route by showing that 
the solution of the finite element method \eqref{eqn1.1.1} satisfies the following $H^1$ stability estimate:
\begin{align} \label{eqn1.1.4}
\|u_h\|_{H^1(\W)} \leq C\|f\|_{H^{-1}(\W)}.  
\end{align}
With \eqref{eqn1.1.4} in hand, optimal order error estimates in $H^1$-norm immediately follow.  
The motivation for \eqref{eqn1.1.4} arises from \cite{Feng2018SS} where the authors have recently 
shown that $\mL:H^1(\W)\to H^{-1}(\W)$ with stability estimate
\begin{align} \label{eqn1.1.5}
\|u\|_{H^1(\W)} \leq C\|f\|_{H^{-1}(\W)}.  
\end{align}
We see that \eqref{eqn1.1.4} is the discrete analogue to \eqref{eqn1.1.5}, and it will be proved by adapting 
the freezing coefficient technique of \cite{Feng2018SS} at the discrete level. 

Moreover, the numerical experiments in \cite{AX:FN,Feng2017SS} suggest that the $C^0$ finite element method defined by  \eqref{eqn1.1.1} and IP-DG method 
defined by \eqref{eqndg} are well-posed and converge for the linear finite element.  Such a result cannot be shown using the $H^2$-norm stability estimate \eqref{eqn1.1.2} 
because  linear finite element functions cannot accurately approximate $H^1$ functions in a 
discrete $H^2$-norm.  
As a consequence, the authors in \cite{AX:FN,Feng2017SS} restricted their analysis to quadratic 
elements and greater.  
However, since \eqref{eqn1.1.4} based in an $H^1$-norm, we additionally show that \eqref{eqn1.1.1} is well-posed and 
converges optimally in the $H^1$-norm for the linear finite element. 

The rest of the paper is organized as follows.  In Section \ref{prelim} we defines the PDE problem and notation 
as well as introduce an auxiliary lemma.  In Section \ref{sectionstab}, we prove \eqref{eqn1.1.4} by first 
considering the case $A:=A_0$ (constant coefficient) and then extending the result to the case of continuous coefficient $A$.  For the case of linear elements, we must additionally use a nonstandard duality argument.
We then derive the desired optimal order error estimate for the  finite element method in the $H^1$-norm.  

\section{Preliminaries}\label{prelim}  

\subsection{Notation}\label{notation} Let $\W$ be an open bounded polygonal domain in $\R^d$.  We use the notation $L^p(\W)$ and $H^k(\W):=W^{k,2}(\W)$ be the standard Lebesgue and Sobolev spaces with appropriate norms, and let $H^{-1}(\W)$ be the dual space of $H_0^1(\W)$.  Let $(\cdot,\cdot)_D$ be the standard $L^2$ inner-product on $D$ with $(\cdot,\cdot):=(\cdot,\cdot)_\W$.

Given $h>0$, we say $a\lss b$ if there is a constant independent of $h$ such that $a\leq Cb$.  Let $\Th$ be a quasi-uniform and shape regular triangulation of $\W$ with interior skeleton $\EI$.  Given $e\in\EI$, let $\nu_e$ be the (well-defined) unit edge normal vector such that $e=\partial T^+\cap\partial T^-$ with $\nu_e=\nu_{T^+}=-\nu_{T^-}$ where $\nu_{T^{\pm}}$ are the unit normal vectors for $\partial T^\pm$ respectively.  We then define the jump of a function $u$ on an edge $e\in\EI$ by
\begin{align*}
[u] = u\big|_{T^+} - u\big|_{T^-}
\end{align*}
where $T^{\pm}$ is well-defined through $\nu_e$.  Lastly, define $\left<\cdot,\cdot\right>=\left<\cdot,\cdot\right>_e$ to be the $L^2$ inner product on $e$ for any $e\in\EI$.

We now define the specific function spaces used in this paper.   Set $V_h := V_h^{r}$ be the $C^0$ Lagrange finite element space of polynomial degree $r$.  In addition, let $H^2(\Th)$ be the broken $H^2$ Sobolev space defined by
\begin{align*}
H^2(\Th) = \{v\in L^2(\W) : v\in H^2(T) \ \forall T\in\Th\},
\end{align*}
and let $H_h^2(\W) := H^2(\Th)\cap H_0^1(\W)$.
Given a subdomain $D\subseteq \W$, define the space 
\begin{align*}
H_h^2(D):=\{v\in H_h^2(\W) : v\big|_{\W\setminus D} \equiv 0\}
\end{align*}
with norm
\begin{align*}
\|w\|_{H_h^2(D)} = \sum_{T\in\Th}\|D^2w\|_{L^2(T\cap D)} + \bigg(\sum_{e\in\EI}h_e^{-1}\|[\grad w]\|_{L^2(e\cap\overline{D})}\bigg)^{\frac12}.
\end{align*}
Clearly we have $V_h\subset H_h^2(\W)$.  
Additionally define $V_h(D)\subset V_h$ by
\begin{align}
V_h(D) = \{ v_h\in V_h : v_h\big|_{\W\setminus D}\equiv 0\}.
\end{align}
We note from \cite{AX:FN} that the space $V_h(D)$ is non-trivial provided $\mbox{diam}(D)>2h$.  
Also, define the discrete dual-norms $\|\cdot\|_{L_h^2(D)}$ and $\|\cdot\|_{H_h^{-1}(D)}$ by
\begin{align*}
\|w\|_{L_h^2(D)} &= \sup_{v_h\in V_h(D)\setminus\{0\}}\frac{(w,v_h)_D}{\|v_h\|_{L^2(D)}}, \text{ and}  \\
\|w\|_{H_h^{-1}(D)} &= \sup_{v_h\in V_h(D)\setminus\{0\}}\frac{(w,v_h)_D}{\|\grad v_h\|_{L^2(D)}}.
\end{align*}
Let $\mathcal{P}_h:L^2(\W)\to V_h(D)$ be the $L^2$ projection onto $V_h(D)$ given by 
\begin{align}\label{l2proj}
(\mathcal{P}_h w,v_h)_D = (w,v_h)_D \ \forall v_h\in V_h(D).  
\end{align}
From \cite{crouzeix1987stability}, we have 
\begin{align}\label{l2est}
\|\mathcal{P}_hw\|_{H^1(D)} \lss \|\grad w\|_{L^2(D)}
\end{align}
for any $w\in H_0^1(D)$.  Thus, from \eqref{l2proj} and \eqref{l2est} we obtain
\begin{align}\label{normequiv}
\|w_h\|_{H^{-1}(D)} &= \sup_{v\in H_0^1(D)\setminus\{0\}}\frac{(w_h,v)_D}{\|\grad v\|_{L^2(D)}} = \sup_{v\in H_0^1(D)\setminus\{0\}}\frac{(w_h,\mathcal{P}_h v)_D}{\|\grad v\|_{L^2(D)}} \\
&\lss \sup_{v\in H_0^1(D)\setminus\{0\}}\frac{(w_h,\mathcal{P}_h v)_D}{\|\grad \mathcal{P}_hv\|_{L^2(D)}} \leq \|w_h\|_{H_h^{-1}(D)} \nonumber
\end{align}
for any $w_h\in V_h(D)$.  

\subsection{PDE problem and finite element method}\label{pdeprob}

We first introduce the problem. Let $f\in L^2(\W)$.  In addition, we assume $A\in [C^0(\overline\W)]^{d\times d}$ 
is symmetric and uniformly positive definite, that is, there is $0<\lambda\leq\Lambda$ such that
\begin{align}\label{eigenvalue}
\lambda |\xi|^2 \leq \xi^TA(x)\xi \leq \Lambda |\xi|^2 \quad\forall x\in\overline\W,\xi\in\R^d.
\end{align}
We seek to approximate the unique strong solution $u\in H^2(\W)\cap H_0^1(\W)$ to
\begin{align*} \label{prob}\tag{$P$}
\begin{split} 
\mL u:= -A:D^2u &= f \qquad \text{ in } \W, \\ 
u &= 0 \qquad \text{ on } \partial\W.  
\end{split}
\end{align*} \addtocounter{equation}{1}
that satisfies \eqref{prob} a.e.\ in $\W$.  Here $A:D^2u$ is the matrix inner product give by
\begin{align*}
A:D^2u = \sum_{i,j=1}^{d} a_{ij}u_{x_ix_j}.
\end{align*}
In addition to the invertibility of $\mL$, we also have the stability estimates
\begin{align}
\|u\|_{H^2(\W)} &\lss \|\mL u\|_{L^2(\W)} \label{eqn1.2},\\
\|u\|_{H^1(\W)} &\lss \|\mL u\|_{H^{-1}(\W)} \label{eqn1.3}.
\end{align}
The well-posedness of \eqref{eqn1.1} and the estimates (\ref{eqn1.2}-\ref{eqn1.3}) are guaranteed provided $\partial\W\in C^{1,1}$. \cite[Chapter 9]{Sp:DT} and \cite{Feng2018SS}.  

As per \cite{AX:FN}, define the discrete linear operator $\mLh:V_h\to V_h$ by 
\begin{align} \label{eqnlh}
(\mLh w_h,v_h) = (-A:D_h^2w_h,v_h) + \sum_{e\in\EI}\left<[A\grad w_h\dv],v_h \right>_e,
\end{align}
where $D_h^2$ is the piecewise defined Hessian on every $T\in\Th$.

We can now define the nonstandard finite element method for problem \eqref{eqn1.1}.

\begin{defn} \label{defnFEmethod}
We define the $C^0$ finite element method for \eqref{eqn1.1} as seeking $u_h\in V_h$ such that
\begin{align} \label{eqnc1fe}
(\mLh u_h,v_h) = (f,v_h)  \qquad\forall v_h\in V_h.
\end{align}
\end{defn} 
From \cite{AX:FN}, there is a unique solution $u_h\in V_h$ to \eqref{eqnc1fe} for $r\geq 2$ with stability estimate
\[
 \|w_h\|_{H_h^2(\W)} \lss \|\mLh w_h\|_{L_h^2(\W)}
\]
for all $w_h\in V_h$.  
Note that we can extend $\mLh:H^2(\Th)\cap H_0^1(\W)\to H^{-1}(\W)$ by
\begin{align} \label{eqnlhdual}
(\mLh w,v) = (-A:D_h^2w,v) + \sum_{e\in\EI}\left<[A\grad w\dv],v \right>_e \quad\forall v\in H_0^1(\W).
\end{align}

Lastly, we quote a super-approximation result from \cite{AX:FN}.

\begin{lemma}\label{lemsuperapprox}
Let $I_h$ be the $C^0$ nodal finite element interpolant onto $V_h$ and $\eta\in C^{\infty}(\W)$ with $\|\eta\|_{W^{j,\infty}}=\mathcal{O}(d^{-j})$ for some $h\leq d<1$.  Then for any subdomain $D\subseteq\W$ with inscribed radius larger larger than $3h$ we have the following:
\begin{align*}
\|\grad(\eta v_h-I_h(\eta v_h))\|_{L^2(D)} &\lss \frac{1}{d}\|v_h\|_{L^2(D)}.
\end{align*}
\end{lemma}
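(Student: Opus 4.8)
The plan is to prove the super-approximation estimate of Lemma~\ref{lemsuperapprox} by reducing the global bound on $D$ to a sum of local estimates on the mesh elements, and on each element exploiting that the nodal interpolation error of a polynomial times a smooth cutoff is controlled by the derivatives of the cutoff. First I would recall that for each $T\in\Th$ the interpolant $I_h$ reproduces polynomials of degree $r$, so $\eta v_h - I_h(\eta v_h) = (\eta v_h - p) - I_h(\eta v_h - p)$ for any $p\in \mathcal{P}_r(T)$; choosing $p$ to be (say) the Taylor polynomial of $\eta$ at the centroid $x_T$ of $T$ multiplied by $v_h\big|_T$ — which is itself a polynomial of degree $\le r$ only if we are careful, so more precisely one takes $p = T_{x_T}^{0}\eta \cdot v_h|_T$ where $T_{x_T}^0\eta$ is the constant $\eta(x_T)$, or a low-order Taylor polynomial combined with an inverse estimate — one gets that $\eta v_h - p$ is small in the relevant norms on $T$.

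Concretely, the key steps in order would be: (1) On a fixed $T\subseteq D$, write $w := \eta v_h - I_h(\eta v_h)$ and use the standard interpolation estimate $\|\grad w\|_{L^2(T)} \lesssim h_T \|D^2(\eta v_h)\|_{L^2(T)}$ together with $\|\grad w\|_{L^2(T)}\lesssim \|\grad(\eta v_h)\|_{L^2(T)}$ — but this second crude bound is not good enough, so instead subtract the polynomial $p=\eta(x_T)v_h|_T$: then $\|\grad w\|_{L^2(T)} = \|\grad((\eta-\eta(x_T))v_h - I_h((\eta-\eta(x_T))v_h))\|_{L^2(T)} \lesssim h_T\|D^2((\eta-\eta(x_T))v_h)\|_{L^2(T)}$. (2) Expand the Hessian by the product rule: $D^2((\eta-\eta(x_T))v_h) = (\eta-\eta(x_T))D^2v_h + 2\grad\eta\otimes\grad v_h + v_h D^2\eta$ (symmetrized), and bound each term on $T$ using $|\eta-\eta(x_T)|\lesssim h_T\|\grad\eta\|_{L^\infty}\lesssim h_T/d$, $\|\grad\eta\|_{L^\infty}\lesssim 1/d$, $\|D^2\eta\|_{L^\infty}\lesssim 1/d^2$, and the inverse estimates $\|D^2 v_h\|_{L^2(T)}\lesssim h_T^{-1}\|\grad v_h\|_{L^2(T)}\lesssim h_T^{-2}\|v_h\|_{L^2(T)}$ and $\|\grad v_h\|_{L^2(T)}\lesssim h_T^{-1}\|v_h\|_{L^2(T)}$. (3) Collect: each term contributes $\lesssim (h_T/d)\cdot h_T^{-1}\|v_h\|_{L^2(T)} = d^{-1}\|v_h\|_{L^2(T)}$ after the outer factor $h_T$, using $h_T\le h\le d$ to absorb the remaining power of $h_T/d\le 1$. (4) Square, sum over $T\subseteq D$ (the elements with $T\cap D\neq\emptyset$ actually, but since $v_h\in V_h$ is supported appropriately and $D$ has inscribed radius $>3h$ the boundary layer is harmless), and take the square root to obtain $\|\grad w\|_{L^2(D)}\lesssim d^{-1}\|v_h\|_{L^2(D)}$.

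The main obstacle I anticipate is the bookkeeping near $\partial D$: the statement restricts to subdomains $D$ with inscribed radius larger than $3h$ precisely so that the elements straddling $\partial D$ form a controlled layer, and one must check that $\eta v_h - I_h(\eta v_h)$ restricted to such a layer is still bounded by $d^{-1}\|v_h\|_{L^2(D)}$ rather than by something involving $v_h$ outside $D$; this is where the hypothesis on the inscribed radius and the shape-regularity/quasi-uniformity of $\Th$ enter, guaranteeing that each element meeting $D$ is comparable in size to $h$ and that the union of such elements has $L^2(v_h)$-mass comparable to $\|v_h\|_{L^2(D)}$ up to a constant. A secondary technical point is ensuring the chosen subtracted polynomial $p$ has degree $\le r$ so that $I_h$ reproduces it exactly; taking $p = \eta(x_T)\,v_h\big|_T$ works since $\deg p = \deg v_h\big|_T \le r$, which is why we subtract only the \emph{constant} value of $\eta$ and not a higher Taylor polynomial — the gain in the $\eta$-difference being $h_T$, exactly one power, which is what the estimate needs. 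The rest is a routine application of standard local interpolation and inverse estimates available on quasi-uniform shape-regular meshes.
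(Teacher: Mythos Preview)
The paper does not actually prove Lemma~\ref{lemsuperapprox}; it simply quotes the result from \cite{AX:FN}. So there is no ``paper's own proof'' to compare against. Your sketch is the standard Nitsche--Schatz super-approximation argument and is essentially correct: on each element $T$ you subtract the polynomial $\eta(x_T)\,v_h|_T\in\mathcal{P}_r(T)$ (which $I_h$ reproduces), apply the local interpolation estimate $\|\grad(u-I_hu)\|_{L^2(T)}\lesssim h_T|u|_{H^2(T)}$ (valid for every $r\ge1$ since $I_h$ preserves linears), expand the Hessian of $(\eta-\eta(x_T))v_h$ by Leibniz, and close with inverse estimates and $h_T\le h\le d$. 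Each of the three terms indeed contributes $\lesssim d^{-1}\|v_h\|_{L^2(T)}$ after the outer factor $h_T$, exactly as you compute.

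One small correction to your boundary discussion: the role of the inscribed-radius hypothesis is not really to control $L^2$-mass on a boundary layer. The elementwise bound naturally sums to $\|\grad w\|_{L^2(D)}\lesssim d^{-1}\|v_h\|_{L^2(\widetilde D)}$ where $\widetilde D$ is the union of elements meeting $D$, i.e.\ $D$ fattened by $\mathcal{O}(h)$. In the paper's actual applications (see \eqref{eqn3.2.4} and \eqref{eqn3.2.21}) this is harmless because either $\eta$ is compactly supported in the domain $D=B_2$ so the interpolation error vanishes near $\partial D$, or the right-hand side is immediately enlarged from $B_1$ to $B_2$ in the next line. The inscribed-radius condition is there mainly to ensure $V_h(D)$ is nontrivial so that the discrete dual norms used elsewhere make sense; it is not doing the heavy lifting you attribute to it. With that caveat, your argument is the right one.
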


\section{Discrete $H^1$-norm stability estimate}  \label{sectionstab}

Our goal in this section is to prove a similar analogue of \eqref{eqn1.3} for our discrete operator $\mLh$, that is
\begin{align}\label{eqnh1stabtemp}
	\|\grad w_h\|_{L^2(\W)} \lss \|\mLh w_h\|_{H_h^{-1}(\W)}
\end{align}
for any $w_h\in V_h$.  To achieve this, we follow the freezing coefficient technique on the discrete level as seen in \cite{Feng2018SS,Feng2017SS,AX:FN}; however, because we already have the existence and uniqueness of $u_h$ to \eqref{eqnc1fe}, 
we can bypass the rather lengthy and technical nonstandard duality argument given in those works.  The freezing coefficient
technique exploits the fact that since $A$ is continuous, it is essentially a constant locally. For $A_0$ constant, 
we may represent the non-divergence operator $A_0:D^2 u$ as a divergence operator $\div(A_0\grad u)$.  Thus,  using 
standard elliptic theory we arrive at
at \eqref{eqnh1stabtemp} for $A_0$.  Using the continuity of $A$, we may achieve a local version of \eqref{eqnh1stabtemp}, 
which we may extend globally.  We split the appropriate material into two subsections -- subsections \ref{subsectconstant} 
and \ref{subsectcontinuous} will treat the constant and continuous cases for $A$ respectively.  

\subsection{$H^1$-norm stability estimate for the case of constant coefficient $A$} \label{subsectconstant}

Consider $A\equiv A_0$ on $\W$.  Then we may write
\begin{align*}
	\mA_0(w,v) &:=  (-A_0:D^2w,v) \\
	&= (-\div(A_0\grad w),v) \\
	&=  (A_0\grad w,\grad v)
\end{align*}
for any $w\in H^2(\W)\cap H_0^1(\W)$ and $v\in H_0^1(\W)$.  Clearly $\mA_0(\cdot,\cdot)$ is continuous and coercive on $H_0^1(\W)$ and $V_h$ with respect to the norm $\|\grad w\|_{L^2(\W)}$.  Define $\mLzh:V_h\to V_h$ by
\[
	(\mLzh w_h,v_h) = \mA_0(w_h,v_h) \quad\forall v_h\in V_h.
\]
Moreover, we can easily extend the domain of $\mLzh$ as a mapping from  $H_0^1(\W)$ to $H^{-1}(\W)$.  
By the finite element theory for elliptic problems, we obtain a discrete $H^1\to H^{-1}$ local stability estimate for $\mLzh$ shown in the following lemma.

\begin{lemma} \label{lemhm1constantlocal}
Let $x_0\in \W$, $R>0$ with $R>3h$.  Then for any $w_h\in V_h(B_R)$ we have
\begin{align}\label{eqnhm1constantlocal}
\|\grad w_h\|_{L^2(B_R)} \lss \|\mLzh w_h \|_{H_h^{-1}(B_{R})}.
\end{align}
where $B_R$ and is the ball centered at $x_0$ with radius $R$.  
\end{lemma}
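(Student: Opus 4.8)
The plan is to reduce the desired discrete $H^1$–$H^{-1}$ estimate for the constant-coefficient operator $\mLzh$ to the well-known finite element theory for the coercive bilinear form $\mA_0(\cdot,\cdot)$. The key observation is that on $V_h(B_R)$ the operator $\mLzh$ is exactly the Galerkin operator associated to the symmetric, continuous, and coercive form $\mA_0$. First I would fix $w_h\in V_h(B_R)$ and use coercivity of $\mA_0$ on $H_0^1(B_R)$: there is $c>0$ with $c\|\grad w_h\|_{L^2(B_R)}^2\le \mA_0(w_h,w_h)=(\mLzh w_h,w_h)_{B_R}$. Dividing by $\|\grad w_h\|_{L^2(B_R)}$ and using that $w_h\in V_h(B_R)$ is an admissible test function in the definition of $\|\cdot\|_{H_h^{-1}(B_R)}$ gives
\begin{align*}
c\,\|\grad w_h\|_{L^2(B_R)} \le \frac{(\mLzh w_h,w_h)_{B_R}}{\|\grad w_h\|_{L^2(B_R)}} \le \|\mLzh w_h\|_{H_h^{-1}(B_R)},
\end{align*}
which is precisely \eqref{eqnhm1constantlocal}.

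The only subtlety, and the step I expect to need the most care, is making sure all the pieces are genuinely defined on $B_R$ rather than on all of $\W$: one must check that $V_h(B_R)$ is nontrivial (guaranteed by the hypothesis $R>3h$, so that $\mbox{diam}(B_R)=2R>2h$, via the remark following the definition of $V_h(D)$), that extending $w_h$ by zero outside $B_R$ keeps it in $H_0^1(B_R)$ so that the coercivity constant of $\mA_0$ on $H_0^1(\W)$ transfers verbatim to $H_0^1(B_R)$, and that the integrals defining $\mLzh w_h$ and the dual norm localize correctly to $B_R$ (the Hessian and edge-jump terms of $w_h$ vanish outside the support of $w_h$, which lies in $B_R$). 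Since $\mA_0(w,v)=(A_0\grad w,\grad v)$ involves no interior jump terms, there is in fact no discrete component to the $H^{-1}$ bound here — the estimate is the standard elliptic one — and the constant in $\lss$ depends only on $\lambda$ from \eqref{eigenvalue}, hence is independent of $h$, $R$, and $x_0$.

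One could alternatively phrase this without coercivity by invoking the Lax–Milgram/inf-sup stability of $\mLzh$ on $V_h(B_R)$, but the coercivity route above is the cleanest and makes the $h$-, $R$-, and $x_0$-independence of the constant transparent; I would present it that way.
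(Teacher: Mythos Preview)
Your proposal is correct and follows essentially the same argument as the paper: use coercivity of $\mA_0$ to bound $\|\grad w_h\|_{L^2(B_R)}^2$ by $(\mLzh w_h,w_h)_{B_R}$, then divide by $\|\grad w_h\|_{L^2(B_R)}$ and recognize $w_h$ as an admissible test function in the definition of $\|\cdot\|_{H_h^{-1}(B_R)}$. Your discussion of the localization subtleties (nontriviality of $V_h(B_R)$ from $R>3h$, support of $w_h$ contained in $B_R$, $h$-independence of the constant via $\lambda$) matches exactly what the paper relies on, so there is nothing to add.
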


\begin{proof}
Let $x_0\in\W$ and $w_h\in V_h(B_R)\setminus\{0\}$ which is nonempty since $R>3h$.  Note $w_h\in V_h$ and $w\equiv 0$ on $\W\setminus B_R$. We use the fact that $\mAze(\cdot,\cdot)$ is coercive to obtain
\begin{align}\label{eqn3.3}
\| \grad w_h \|_{L^2(B_R)}^2 &= \| \grad w_h \|_{L^2(\W)}^2 = (\grad w_h,\grad w_h) \\
&\lss \lambda(\grad w_h,\grad w_h)  \nonumber\\
&\lss (\mLzh w_h,w_h)  \nonumber\\
&\lss (\mLzh w_h,w_h)_{B_R} \nonumber \\
&\lss \|\mLzh w_h\|_{H_h^{-1}(B_R)}\|\grad w_h\|_{L^2(B_R)}. \nonumber
\end{align}
Dividing both sides by $\|\grad w_h\|_{L^2(B_R)}$ gives us \eqref{eqnhm1constantlocal}. The proof is complete.
\end{proof} 

\subsection{$H^1$-norm stability estimate for the case of continuous coefficient $A$} \label{subsectcontinuous}

Our goal for this subsection is to use Lemma \ref{lemhm1constantlocal} to show \eqref{eqnh1stabtemp}.  In order to achieve this, we first must take a new look at the operator $\mLh$.  Note we originally defined $\mLh$ as bounded linear operator from $H_h^2(\W)$ to $((V_h)^*,\|\cdot\|_{L_h^2})$.   Here the boundedness comes from Lemma 3.3 in \cite{AX:FN}, namely
\begin{align}
\|\mLh w\|_{L_h^2(D)} \lss \|w\|_{H^2_h(D)}.
\end{align} 
However, just like with $\mLzh$, we can also view $\mLh$ as an operator from $H_0^1(\W)$ to $H^{-1}(\W)$ through the following lemma:

\begin{lemma}\label{lemmahm1}
Let $D\subseteq \W$ be a subdomain.  Then for any $w\in H_h^2(\W)$, we have 
\begin{align} 
\|\mLh w\|_{H^{-1}(D)} &\lss \|\grad w\|_{L^2(\W)}, \label{eqnhm1-2}  \\
\|\mLh w\|_{H_h^{-1}(D)} &\lss \|\grad w\|_{L^2(\W)}. \label{eqnhm1-1}
\end{align}
\end{lemma}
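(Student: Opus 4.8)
The plan is to bound each of the two contributions to $\mLh w$ separately, namely the piecewise Hessian term and the jump term, by testing against an arbitrary $v \in H_0^1(D)$ (for \eqref{eqnhm1-2}) or $v_h \in V_h(D)$ (for \eqref{eqnhm1-1}). In fact, by the norm equivalence \eqref{normequiv} it suffices to establish \eqref{eqnhm1-1}, since $\mLh w$ restricted to $V_h(D)$ is an element of that space and \eqref{normequiv} gives $\|\mLh w\|_{H^{-1}(D)} \lesssim \|\mLh w\|_{H_h^{-1}(D)}$ whenever the right-hand side is finite and $\mLh w \in V_h(D)$ --- although here $\mLh w$ need not lie in $V_h(D)$, so I would instead just prove both inequalities in parallel, treating $v$ or $v_h$ uniformly as a test function $\phi \in H_0^1(D)$. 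I will write $\phi$ for the generic test function and estimate $(\mLh w, \phi)_D$.

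The first step is the Hessian term: $|(-A : D_h^2 w, \phi)_D| \le \|A\|_{L^\infty} \sum_{T \in \Th} \|D^2 w\|_{L^2(T \cap D)} \|\phi\|_{L^2(T\cap D)} \lesssim \|w\|_{H_h^2(\W)} \|\phi\|_{L^2(D)}$. But this produces $\|w\|_{H_h^2}$, not $\|\grad w\|_{L^2}$, so a naive Cauchy--Schwarz is the wrong move. The correct idea is to integrate by parts elementwise: on each $T$, $\int_T (-A:D^2 w)\phi\,dx = \int_T A\grad w \cdot \grad\phi\,dx - \int_{\partial T} (A\grad w \cdot \nu_T)\phi\,dS$ --- wait, that is only valid when $A$ is smooth, which it is not. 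The right fix, and the genuinely delicate point, is to \emph{freeze} $A$: write $A = A(x_T) + (A - A(x_T))$ on each element $T$ with $x_T$ a point of $T$, integrate by parts only against the constant piece $A(x_T)$ (which is legitimate), and bound the remainder $\int_T (A - A(x_T)):D^2 w\,\phi$ crudely. Since $A$ is merely continuous, the remainder is small only in an $h$-dependent modulus-of-continuity sense, so this still does not by itself give a clean $\|\grad w\|_{L^2}$ bound. Re-examining \cite[Lemma 3.3]{AX:FN}, I expect the actual mechanism is different: one integrates by parts elementwise against \emph{all} of $A$ treating the $\div A$ terms as living on edges, so that after summing over elements the volume second-derivative terms are converted into $\sum_T \int_T A\grad w\cdot\grad\phi$ plus boundary terms, and the boundary contributions from interior edges combine with the jump term $\sum_{e\in\EI}\langle[A\grad w\cdot\nu_e],\phi\rangle_e$. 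Here I am assuming $\phi$ is continuous across interior edges (true for $\phi\in H_0^1(D)$ and for $\phi=v_h\in V_h(D)$), so $\langle[A\grad w\cdot\nu_e],\phi\rangle_e = \langle[A\grad w\cdot\nu_e],\phi\rangle_e$ pairs a jump against a single-valued trace.

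The key technical step, then, is the elementwise integration by parts identity
\begin{align*}
(\mLh w, \phi)_D = \sum_{T\in\Th}\int_{T\cap D} A\grad w\cdot\grad\phi\,dx + (\text{lower-order edge terms in }\grad w\text{ and }\div A),
\end{align*}
after which the first sum is controlled by $\|A\|_{L^\infty}\|\grad w\|_{L^2(\W)}\|\grad\phi\|_{L^2(D)}$, giving exactly $\|\mLh w\|_{H_h^{-1}(D)}\lesssim\|\grad w\|_{L^2(\W)}$; the $H^{-1}(D)$ bound \eqref{eqnhm1-2} follows identically, or via \eqref{normequiv}. The main obstacle I anticipate is making the integration-by-parts identity rigorous without smoothness of $A$: because $\div A$ does not exist as an $L^2$ function, one cannot literally write $-A:D^2w = -\div(A\grad w) + \div A\cdot\grad w$. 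The clean way around this is to first establish the identity for $w$ in a dense smooth subclass where it holds classically and $A$ replaced by a mollified $A_\delta$, pass to the limit $\delta\to 0$ using only $\|A_\delta\|_{L^\infty}\lesssim\|A\|_{L^\infty}$ and $A_\delta\to A$ uniformly (so the volume term $\int A_\delta\grad w\cdot\grad\phi\to\int A\grad w\cdot\grad\phi$ and the jump term converges too), and then extend from smooth $w$ to $w\in H_h^2(\W)$ by density in the broken $H^2$-norm. Alternatively, and more simply, one invokes \cite[Lemma 3.3]{AX:FN} as a black box for the boundedness and only needs to verify that the resulting bound can be stated in the $\|\grad w\|_{L^2}$ norm rather than the $\|w\|_{H_h^2}$ norm --- this is where the structure of $\mLh$ (the Hessian term being ``balanced'' by the jump term so that only first derivatives of $w$ survive after integration by parts) is essential, and I would expect the proof in the paper to spell this cancellation out explicitly on each interior edge.
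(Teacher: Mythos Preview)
Your overall strategy---integrate by parts elementwise so that the piecewise Hessian term plus the jump term collapse to $\sum_T\int_T A\nabla w\cdot\nabla\phi$ plus a remainder, then bound by $\|A\|_{L^\infty}\|\nabla w\|_{L^2}\|\nabla\phi\|_{L^2}$---is the right shape, and it matches the second half of the paper's proof. But there is a genuine gap in how you dispose of the remainder. For $A\in C^1$ and $\phi\in H_0^1(D)$ the edge contributions cancel \emph{exactly} (not approximately), and the identity you are after is
\[
(\mLh w,\phi)=\int_\Omega A\nabla w\cdot\nabla\phi\,dx+\int_\Omega(\div A\cdot\nabla w)\,\phi\,dx,
\]
so the leftover is a \emph{volume} term involving $\div A$, not a lower-order edge term. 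The naive bound $|\int(\div A\cdot\nabla w)\phi|\le\|\div A\|_{L^\infty}\|\nabla w\|_{L^2}\|\phi\|_{L^2}$ carries a constant that blows up when you mollify a merely continuous $A$, so your plan ``pass to the limit using only $\|A_\delta\|_{L^\infty}\lesssim\|A\|_{L^\infty}$'' does not go through for this term.

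The paper supplies the missing idea by proving the bound \emph{first} for $w\in H^2(D)\cap H_0^1(D)$ (where the jump term vanishes and $\mLh w=\mL w$) via a sign-splitting trick that never differentiates $A$: approximate $w$ by smooth $\varphi^k$, write $|(-a_{ij}\varphi^k_{x_ix_j},v)|\le\|a_{ij}\|_{L^\infty}(|\varphi^k_{x_ix_j}|,|v|)$, decompose $\varphi^k_{x_ix_j}$ into its positive and negative parts, and integrate by parts on the open sets $\{\pm\varphi^k_{x_ix_j}>0\}$ to move one derivative onto $|v|$. This yields $(\mL w,v)\lesssim\|A\|_{L^\infty}\|\nabla w\|_{L^2}\|\nabla v\|_{L^2}$ with a constant independent of any smoothness of $A$, and hence a bounded extension $\mL':H_0^1(D)\to H^{-1}(D)$. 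Only \emph{after} this estimate is in hand does the paper carry out the elementwise integration by parts you describe (assuming $A\in C^1$, then passing to $A\in C^0$ by density), and its sole purpose at that stage is to \emph{identify} $\mL'=\mLh$ on $H_h^2(D)$---the estimate itself is inherited from the sign-splitting step, not re-derived from the identity. This is precisely the piece your sketch lacks.
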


\begin{proof}
We first consider the case where $w\in H^2(D)\cap H_0^1(D)$.  Note that since $w\in H^2(D)$, the term $\left< A[\grad w]\dv,v \right>_e$ vanishes for all $e\in\EI$.   Thus on $H^2(D)$ we have
\begin{align}
(\mLh w,v) = (-A:D^2 w,v) = (\mL w,v)
\end{align}
for any $v\in H_0^1(D)$. 

We wish to show
\begin{align}\label{eqnhm1}
(-A:D^2w,v)_D \lss \|A\|_{L^{\infty}(D)}\|\grad w\|_{L^2(D)}\|\grad v\|_{L^2(D)}.
\end{align}
for any $v\in H_0^1(D)$.  Since $C^{\infty}(D)\cap H_0^1(D)$ is dense in $H^2(D)\cap H_0^1(D)$, let $\phi_k\in C^{\infty}(D)\cap H_0^1(D)$ be a sequence such that $\phi^k\to w$ in $H^2(D)\cap H_0^1(D)$. H\"older's inequality gives us
\begin{align*}
(-a_{ij}\phi^k_{x_ix_j},v)_D \leq \|a_{ij}\|_{L^\infty(D)}\|\phi^k_{x_ix_j}v\|_{L^1(\W)} \leq \|a_{ij}\|_{L^\infty(D)}(|\phi^k_{x_ix_j}|,|v|)_D.
\end{align*}
We decompose $\phi^k_{x_ix_j} = \phi^{k,+}_{x_ix_j} - \phi^{k,-}_{x_ix_j}$ where $\phi^{k,+}_{x_ix_j} = \max\{0,\phi^k_{x_i,x_j}\}$ and $\phi^{k,-}_{x_ix_j} = \max\{0,-\phi_{x_i,x_j}\}$.  Thus $|\phi^k_{x_ix_j}| = \phi^{k,+}_{x_ix_j} + \phi^{k,-}_{x_ix_j}$.  Since $\phi^k_{x_ix_j}$ is continous, $\{\phi^k_{x_ix_j}>0\}\subseteq D$ is open. Hence we may integrate by parts to see
\begin{align*}
(\phi^{k,+}_{x_ix_j},|v|)_D &= \int_{\{\phi^k_{x_ix_j}>0\}} \phi^k_{x_ix_j}|v| \dx[x] = -\int_{\{\phi^k_{x_ix_j}>0\}}\phi^k_{x_i}|v|_{x_j} \dx[x] \\&\leq \|\phi^k_{x_i}\|_{L^2(D)}\|v_{x_j}\|_{L^2(D)}.
\end{align*}
The last inequality follows from $\big||v|_{x_j}\big| = |v_{x_j}|$.  We can show the same result for $\phi^{k,-}_{x_ix_j}$; therefore
\begin{align*}
(a_{ij}\phi^k_{x_ix_j},v)_D \leq 2 \|a_{ij}\|_{L^\infty(D)}\|\phi^k_{x_i}\|_{L^2(D)}\|v_{x_j}\|_{L^2(D)}
\end{align*}
Summing for $i,j=1,\ldots,d$, we have
\begin{align*}
(-A:D^2\phi^k,v)_D \leq 2d \|A\|_{L^\infty(D)}\|\grad \phi_k\|_{L^2(D)}\|\grad v\|_{L^2(D)}.
\end{align*}
Letting $k\to \infty$ we arrive at \eqref{eqnhm1}.

Hence we have shown the map $\mL:H^2(D)\cap H_0^1(D)\to H^{-1}(D)$ is a bounded map when $H^2(D)\cap H_0^1(D)$ is endowed with the strong $H^1$ topology.  Since $H^2(D)\cap H_0^1(D)$ is dense in $H_0^1(D)$ under this topology, we may continuously extend $\mL$ to a bounded map $\mL':H_0^1(D)\to H^{-1}(D)$ such that $\mL'\equiv \mL \equiv \mLh$ on $H^2(D)\cap H_0^1(D)$.  

We now wish to show $\mLh=\mL'$ on $H_h^2(D)$, that is
\begin{align}\label{eqn3.2.1.10}
(\mLh w,v) = (\mL'w,v) \quad\forall w\in H_h^2(D), v\in H_0^1(D).
\end{align}
To accomplish this, let $w\in H_h^2(D)$ and consider $w_\rho\in H^2(D)\cap H_0^1(D)$ such that $w_\rho\to w$ in $H_0^1(D)$.  Additionally consider $A\in C^1(\overline\W)$, then we have
\begin{align} \label{eqn3.2.1.11}
-A:D^2 w_\rho = -\div(A\grad w_\rho) + \div(A)\cdot\grad w_\rho.
\end{align}  
Let $v\in H_0^1(\W)$.  By \eqref{eqn3.2.1.11} and integration by parts we have
\begin{align}\label{eqn3.2.1.12}
\begin{split}
(\mL' w_\rho, v) = (\mL w_\rho,v) &= (-A:D^2 w_\rho,v) \\
&= (-\div(A\grad w_\rho),v) + (\div(A)\cdot\grad w_\rho,v) \\
&= (A\grad w_\rho,\grad v) + (\div(A)\cdot\grad w_\rho,v).
\end{split}
\end{align}
Since $w_\rho\to w$ in $H_0^1(D)$ we may pass the limit as $\rho\to 0$ in \eqref{eqn3.2.1.12} to obtain
\begin{align} \label{eqn3.2.1.13}
(\mL' w,v) = \lim_{\rho\to 0}(\mL' w_\rho,v) = (A\grad w,\grad v) + (\div(A)\cdot\grad w,v),
\end{align}
with estimate
\begin{align} \label{eqn3.2.1.13.1}
(\mL'w,v) \lss \|A\|_{L^{\infty}(\W)}\|\grad w\|_{L^2(D)}\|\grad v\|_{L^2(D)}.
\end{align}  
Since $w$ is $H^2$ on every $T\in\Th$, we may perform integration by parts on \eqref{eqn3.2.1.13} element-wise on every $T\in\Th$ and again apply \eqref{eqn3.2.1.11} to obtain
\begin{align}\label{eqn3.2.1.14}
\begin{split}
(\mL' w,v) &= (-A:D^2w,v) + \sum_{e\in\EI}\left<[A\grad w\dv],v \right> \\
&= (\mLh w,v)
\end{split}
\end{align} 
with estimate 
\begin{align} \label{eqn3.2.1.15}
 (\mLh w,v) \lss \|A\|_{L^{\infty}(\W)}\|\grad w\|_{L^2(D)}\|\grad v\|_{L^2(D)}
\end{align}
following from \eqref{eqn3.2.1.13.1}. 

We will now remove the differentiability condition on $A$.  
Since $C^1(\overline\W)$ is dense in $C^0(\overline\W)$ with the strong $C^0$-topology, \eqref{eqn3.2.1.15} implies (\ref{eqn3.2.1.14}-\ref{eqn3.2.1.15}) hold for $A\in C^0(\overline\W)$. Therefore, dividing \eqref{eqn3.2.1.15} by $\|\grad v\|_{L^2(\W)}$ and taking the supremem over all $v\in H_0^1(D)\setminus\{0\}$ yields \eqref{eqnhm1-2}. \eqref{eqnhm1-1} follows from setting $v=v_h\in V_h$ in \eqref{eqn3.2.1.15}.  The proof is complete.
\end{proof}

Next, we must show that locally $\mLh$ and $\mLzh$ are close in the discrete $H^{-1}$ norm.  This is shown in the following lemma.

\begin{lemma}\label{lemlocalclose}
For any $\delta>0$, there exists $R_\d>0$ and $h_\d>0$ such that for any $x_0\in\W$ with $A_0=A(x_0)$, for any $h\leq h_\d$ and $w\in H_h^2(\W)$ we have
\begin{align} \label{eqn3.2.1}
\| (\mLh-\mLzh)w \|_{H_h^{-1}(B_{R_\d})} \lss \d \|\grad w\|_{L^2(B_{R_\d})}.
\end{align}
where $B_{R_\d}:=B_{R_\d}(x_0)$.
\end{lemma}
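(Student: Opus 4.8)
The plan is to exploit the fact that both operators $\mLh$ and $\mLzh$ act on the piecewise-Hessian term in the same way up to the coefficient matrix, so that the difference $(\mLh - \mLzh)w$ is governed by $A - A_0$, which is small on a small ball by uniform continuity of $A$ on $\overline\W$. Concretely, fix $\delta > 0$. Since $A \in [C^0(\overline\W)]^{d\times d}$ is uniformly continuous, there exists $R_\delta > 0$ such that $\|A - A(x_0)\|_{L^\infty(B_{R_\delta}(x_0))} \lesssim \delta$ for every $x_0 \in \overline\W$; set $h_\delta := R_\delta/4$ (say) so that $R_\delta > 3h$ whenever $h \le h_\delta$, ensuring $V_h(B_{R_\delta})$ is nontrivial and Lemma~\ref{lemsuperapprox} applies.

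Next I would compute $(\mLh - \mLzh)w$ explicitly. For $w \in H_h^2(\W)$ and $v_h \in V_h(B_{R_\delta})$, using \eqref{eqnlhdual} and the definition of $\mLzh$ (interpreting the constant-coefficient operator via its piecewise-Hessian form rather than its divergence form, so the edge terms survive the subtraction), we get
\begin{align*}
((\mLh - \mLzh)w, v_h) = \big(-(A - A_0):D_h^2 w,\, v_h\big)_{B_{R_\delta}} + \sum_{e \in \EI} \big\langle [(A - A_0)\grad w \dv],\, v_h \big\rangle_e.
\end{align*}
Here the key algebraic point is that $A_0$ is constant, so $A_0:D_h^2 w$ coincides with $\div(A_0 \grad w)$ on each element and the jump terms $[A_0 \grad w \dv]$ are exactly what is produced by element-wise integration by parts of $\div(A_0 \grad w)$ against $v_h$; thus the difference collapses cleanly to the displayed expression involving only $A - A_0$. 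Since $v_h \in V_h(B_{R_\delta})$ vanishes outside $B_{R_\delta}$, everything is localized to that ball. The volume term is bounded by $\|A - A_0\|_{L^\infty(B_{R_\delta})} \|D_h^2 w\|_{L^2(B_{R_\delta})} \|v_h\|_{L^2(B_{R_\delta})}$, which is already $\lesssim \delta \|w\|_{H_h^2(B_{R_\delta})} \|v_h\|_{L^2(B_{R_\delta})}$ — but this carries the wrong norm on $w$ (an $H^2_h$ norm, not $\|\grad w\|_{L^2}$), so a naive estimate fails and this is where the real work lies.

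To get the $H^1$-norm on $w$ on the right-hand side, I would instead retain the structure of Lemma~\ref{lemmahm1}: rewrite $((\mLh - \mLzh)w, v_h)$ by applying the identity \eqref{eqn3.2.1.13}/\eqref{eqn3.2.1.14} from that lemma's proof to $\mLh$ with coefficient $A$ and separately to $\mLzh$ with coefficient $A_0$, obtaining (for smooth $A$, then passing to the $C^0$ limit and to $w_\rho \to w$ in $H_0^1$)
\begin{align*}
((\mLh - \mLzh)w, v_h) = \big((A - A_0)\grad w,\, \grad v_h\big)_{B_{R_\delta}} + \big(\div(A)\cdot\grad w,\, v_h\big)_{B_{R_\delta}}.
\end{align*}
The first term is immediately bounded by $\|A - A_0\|_{L^\infty(B_{R_\delta})} \|\grad w\|_{L^2(B_{R_\delta})} \|\grad v_h\|_{L^2(B_{R_\delta})} \lesssim \delta \|\grad w\|_{L^2(B_{R_\delta})} \|\grad v_h\|_{L^2(B_{R_\delta})}$, as desired. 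The second term, $(\div(A)\cdot\grad w, v_h)$, is problematic because $\div(A)$ need not be small and need not even exist for merely continuous $A$; the remedy — and the main obstacle — is to handle it only after a density/limiting argument in which this term is re-integrated by parts back against $v_h$ to transfer a derivative off $v_h$, or, more cleanly, to observe that the same term appears in the expansion of $\mLzh$ with coefficient $A_0$ replaced appropriately so that the $\div$-terms cancel in the difference. Since $A_0$ is constant, $\div(A_0) = 0$, so in fact the $\div$-term in $\mLzh$'s expansion is absent; thus one cannot cancel it and must instead avoid it. The resolution is to not split off a divergence at all: work directly with the bilinear-form identity $((\mLh - \mLzh)w, v_h) = -((A-A_0):D^2w_\rho, v_h)$ for the smooth approximants $w_\rho$, integrate by parts using only that $A - A_0$ is small in $L^\infty$ together with the sign-decomposition trick from Lemma~\ref{lemmahm1}'s proof (the $\phi^{k,\pm}$ argument), which yields $\lesssim \|A - A_0\|_{L^\infty} \|\grad w_\rho\|_{L^2(B_{R_\delta})} \|\grad v_h\|_{L^2(B_{R_\delta})}$ with no divergence term at all, and finally pass $\rho \to 0$ and then smooth-to-continuous in $A - A_0$ (noting the bound is uniform). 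Dividing by $\|\grad v_h\|_{L^2(B_{R_\delta})}$ and taking the supremum over $v_h \in V_h(B_{R_\delta}) \setminus \{0\}$ gives \eqref{eqn3.2.1}. I expect the delicate point to be ensuring that the sign-decomposition integration-by-parts argument localizes correctly to $B_{R_\delta}$ — i.e. that the boundary terms on $\partial B_{R_\delta}$ vanish, which they do because $v_h \in V_h(B_{R_\delta})$ is supported strictly inside — and that all limiting arguments (in $\rho$, in the mollification of $A$) preserve the $\delta$-smallness uniformly in $x_0$ and $h$.
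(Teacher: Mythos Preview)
Your proposal is correct and, after the exploratory detour, lands on precisely the paper's argument: recognize that $\mLh - \mLzh$ is simply $\mLh$ with coefficient matrix $A - A_0$ in place of $A$, then rerun the proof of Lemma~\ref{lemmahm1} (the sign-decomposition estimate \eqref{eqnhm1} followed by the density extension to $H_h^2$) with $\|A-A_0\|_{L^\infty(B_{R_\delta})}\le\delta$ replacing $\|A\|_{L^\infty}$. The paper's proof is in fact just this one-line invocation of Lemma~\ref{lemmahm1}, so your worry about the $\div(A)$ term is moot --- that term appears only as an intermediate identity in Lemma~\ref{lemmahm1} used to match $\mL'$ with $\mLh$, while the final bound \eqref{eqn3.2.1.15} depends only on $\|A\|_{L^\infty}$ and carries over verbatim with $A-A_0$.
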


\begin{proof}
Since $A$ is continuous on compact $\W$, it is uniformly continuous.  Thus for any $\d>0$ there is an $R_\d>0$ such that
\begin{align*}
\|A-A_0\|_{L^\infty(B_{R_\d})} \leq \d.
\end{align*}
Fix $w\in H_h^2(\W)$ and let $h_\d=\frac{1}{3}R_\d$ with $h\leq h_\d$ such that $V_h(B_{R_\d})$ is non-trivial. Note that the operator $\mLh-\mLzh$ has the same form as $\mLh$ but has $A-A_0$ instead of $A$.  Thus, we can repeat the proof of Lemma \ref{lemmahm1} with $A-A_0$ instead of $A$ and bounding this difference uniformly by $\d$ to obtain \eqref{eqn3.2.1}.  The proof is complete.
\end{proof}

Now we focus on a local $H^1\to H^{-1}$ stability estimate for $\mLh$.

\begin{lemma}\label{lemlocalcontinuous}
Let $x_0\in \W$.   There exists $R_1>0$ and $h_*>0$ such that for any $h<h_*$ we have 
\begin{align}\label{eqnlocalcontinuous}
\|\grad w_h\|_{L^2(B_1)} \lss \|\mLh w_h\|_{H_h^{-1}(B_1)}
\end{align}
for any $w_h\in V_h(B_1)$ where $B_1:=B_{R_1}(x_0)$.  
\end{lemma}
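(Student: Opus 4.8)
The plan is to combine the two previous lemmas by a triangle-inequality argument, treating $\mLh$ as a small perturbation of the frozen-coefficient operator $\mLzh$ on a small ball. Fix $x_0\in\W$ and set $A_0=A(x_0)$. First I would choose $\d>0$ small enough that the perturbation constant in Lemma \ref{lemlocalclose} beats the stability constant in Lemma \ref{lemhm1constantlocal}; concretely, if $C_0$ denotes the constant such that $\|\grad w_h\|_{L^2(B_R)}\le C_0\|\mLzh w_h\|_{H_h^{-1}(B_R)}$ for all $w_h\in V_h(B_R)$ and all $R>3h$ (Lemma \ref{lemhm1constantlocal}), and $C_1$ the constant in Lemma \ref{lemlocalclose}, then I would pick $\d$ so that $C_0 C_1 \d \le \tfrac12$. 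Then Lemma \ref{lemlocalclose} produces the corresponding $R_\d>0$ and $h_\d>0$; I set $R_1:=R_\d$ and $h_*:=h_\d$, so that $B_1=B_{R_1}(x_0)$ and the hypotheses of both earlier lemmas are in force for every $h<h_*$.

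Next, for $w_h\in V_h(B_1)$ I would write $\mLzh w_h = \mLh w_h - (\mLh-\mLzh)w_h$ and apply Lemma \ref{lemhm1constantlocal}, then the triangle inequality in the $\|\cdot\|_{H_h^{-1}(B_1)}$ norm, then Lemma \ref{lemlocalclose}:
\begin{align*}
\|\grad w_h\|_{L^2(B_1)} &\le C_0\,\|\mLzh w_h\|_{H_h^{-1}(B_1)} \\
&\le C_0\,\|\mLh w_h\|_{H_h^{-1}(B_1)} + C_0\,\|(\mLh-\mLzh)w_h\|_{H_h^{-1}(B_1)} \\
&\le C_0\,\|\mLh w_h\|_{H_h^{-1}(B_1)} + C_0 C_1 \d\,\|\grad w_h\|_{L^2(B_1)} \\
&\le C_0\,\|\mLh w_h\|_{H_h^{-1}(B_1)} + \tfrac12\,\|\grad w_h\|_{L^2(B_1)}.
\end{align*}
Absorbing the last term into the left-hand side gives $\|\grad w_h\|_{L^2(B_1)}\le 2C_0\,\|\mLh w_h\|_{H_h^{-1}(B_1)}$, which is \eqref{eqnlocalcontinuous}.

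One point to check carefully is that the constants $C_0$ and $C_1$ are genuinely independent of $x_0$, of $h$, and of $w_h$: $C_0$ comes from the coercivity constant of $\mA_0$, which depends only on the ellipticity bound $\lambda$ (uniform over $\overline\W$), and $C_1$ comes from the proof of Lemma \ref{lemmahm1} applied to $A-A_0$, where the bound is $\|A-A_0\|_{L^\infty(B_{R_\d})}$ times a purely dimensional constant — hence the $\d$ factor is what makes the argument uniform in $x_0$. I would also note that $V_h(B_1)$ is nontrivial precisely because $h<h_*=\tfrac13 R_\d$ forces $R_1>3h$, matching the standing hypothesis of both earlier lemmas. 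There is no real obstacle here beyond bookkeeping; the substantive work was already done in Lemmas \ref{lemhm1constantlocal}, \ref{lemmahm1}, and \ref{lemlocalclose}, and the only mild subtlety is ordering the quantifiers so that $\d$ (and hence $R_1$, $h_*$) is selected before $w_h$ and $h$.
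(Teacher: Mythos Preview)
Your proposal is correct and follows essentially the same approach as the paper: apply Lemma~\ref{lemhm1constantlocal} to bound $\|\grad w_h\|_{L^2(B_1)}$ by $\|\mLzh w_h\|_{H_h^{-1}(B_1)}$, split via the triangle inequality, invoke Lemma~\ref{lemlocalclose} for the perturbation term, and absorb with $\d$ chosen small enough. Your write-up is in fact slightly more careful than the paper's in tracking the constants and the order of quantifiers.
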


\begin{proof}
For any $\d>0$, let $R_1=R_{\d}$ and $h_* = \frac{1}{3}R_1$.  We apply Lemma \ref{lemlocalcontinuous} and Lemma \ref{lemhm1constantlocal} to $w_h\in V_h(B_1)$ for any $h<h_*$ to see
\begin{align} \label{eqn3.2.2}
\|\grad w_h\|_{L^2(B_1)} &\lss \|\mLzh w_h\|_{H_h^{-1}(B_1)} \\
&\leq \|(\mLh-\mLzh)w_h\|_{H_h^{-1}(B_1)} + \|\mLh w_h\|_{H_h^{-1}(B_1)} \nonumber \\
&\lss \delta \|\grad w_h\|_{L^2(B_1)} + \|\mLh w_h\|_{H_h^{-1}(B_1)}. \nonumber
\end{align}
Thus we choose $\d$, only dependent on $A$, sufficiently small such that we may move $\|\grad w_h\|_{L^2(B_1)}$ from the right side to the left side.  The proof is complete.
\end{proof}

 We now attempt to extend \eqref{eqnlocalcontinuous} to a global estimate using cutoff functions and a covering argument, but arrive at a G\aa rding-type estimate for now.

\begin{lemma}\label{lemglobalgarding} There is an $h_*>0$ such that for any $h<h_*$ and $w_h\in V_h$ we have
\begin{align}\label{eqnglobalgarding}
\|\grad w_h\|_{L^2(\W)} \lss \|\mLh w_h\|_{H_h^{-1}(\W)} + \|w_h\|_{L^2(\W)}.
\end{align}
\end{lemma}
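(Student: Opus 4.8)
Here is how I would prove Lemma~\ref{lemglobalgarding}. The idea is to globalize the local estimate \eqref{eqnlocalcontinuous} by a covering/cutoff argument, but to do the localization with a partition of \emph{squares} $\sum_i\eta_i^2\equiv 1$ and, crucially, to pair $\mLh$ with the localized function \emph{before} estimating. The reason is that a single cutoff produces a first-order commutator term whose $H_h^{-1}$-norm is $\sim\|A\|_{L^\infty(\W)}\|\grad w_h\|$, with an $O(1)$ constant that could never be absorbed ball by ball; but when one sums the bilinear pairings these terms cancel identically because $\sum_i\eta_i\grad\eta_i=\tfrac12\grad\bigl(\sum_i\eta_i^2\bigr)=0$.

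\textbf{Setup and reduction to a finite element function.} Fix $\d>0$ small, depending only on $A$, as in Lemma~\ref{lemlocalcontinuous}, with associated radius $R_\d$ (shrunk so $R_\d<1$). Put $\rho=R_\d/4$, cover $\overline\W$ by finitely many balls $B_\rho(x_i)$, $i=1,\dots,N$ with $N=N(\W,A)$, and choose $\eta_i\in C^\infty(\overline\W)$ with $\eta_i\equiv1$ on $B_\rho(x_i)$, $0\le\eta_i\le1$, supported in $B_{2\rho}(x_i)$; after dividing each $\eta_i$ by $(\sum_j\eta_j^2)^{1/2}$ (a smooth function $\ge1$ on $\overline\W$) I may assume $\sum_i\eta_i^2\equiv1$ on $\overline\W$ while keeping $\|\eta_i\|_{W^{j,\infty}}\lss R_\d^{-j}$. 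Take $h<h_*$ small so that $R_\d>3h$ and $\phi_i:=I_h(\eta_i w_h)\in V_h(B_{R_\d}(x_i))$. Then, using $\sum_i\eta_i^2\equiv1$, the product rule $\eta_i\grad w_h=\grad(\eta_i w_h)-w_h\grad\eta_i$, and the super-approximation bound of Lemma~\ref{lemsuperapprox} (with scale $d\sim R_\d$, $D=\W$) to control $\grad(\eta_i w_h-\phi_i)$, I get
\[
\|\grad w_h\|_{L^2(\W)}^2=\sum_{i=1}^N\|\eta_i\grad w_h\|_{L^2(\W)}^2\lss\sum_{i=1}^N\|\grad\phi_i\|_{L^2(\W)}^2+\|w_h\|_{L^2(\W)}^2 .
\]
Next I would repeat the proofs of Lemmas~\ref{lemhm1constantlocal}, \ref{lemlocalclose}, \ref{lemlocalcontinuous} verbatim but stop one line earlier (coercivity of $\mLzh$ with frozen coefficient $A_0=A(x_i)$, then absorb the $\d$-small term $(\mLh-\mLzh)\phi_i$ via Lemma~\ref{lemlocalclose}) to obtain the bilinear-form version of the local estimate, $\|\grad\phi_i\|_{L^2(\W)}^2\lss(\mLh\phi_i,\phi_i)_{\W}$ for each $i$.

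\textbf{Commutator identity and summation.} Writing $e_i:=\phi_i-\eta_i w_h$ (so $\|\grad e_i\|_{L^2(\W)}\lss R_\d^{-1}\|w_h\|_{L^2(\W)}$ by Lemma~\ref{lemsuperapprox}) and expanding $(\mLh\phi_i,\phi_i)=(\mLh(\eta_i w_h),\eta_i w_h)+(\mLh e_i,\eta_i w_h)+(\mLh(\eta_i w_h),e_i)+(\mLh e_i,e_i)$, the last three terms are bounded using the $H^1\!\to H^{-1}$ continuity of $\mLh$ from Lemma~\ref{lemmahm1} (estimate \eqref{eqnhm1-2}); they produce only $\|w_h\|_{L^2(\W)}\|\grad w_h\|_{L^2(\W)}$ and $\|w_h\|_{L^2(\W)}^2$ contributions, harmless after summing the $N$ indices and applying Young's inequality with a small parameter. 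For the main term, the product rule applied to $D_h^2(\eta_i w_h)$ and to the interior-edge jumps in the definition \eqref{eqnlhdual} of $\mLh$ gives, for all $v\in H_0^1(\W)$ (no derivatives of $A$ appear, so $A\in C^0$ suffices),
\[
(\mLh(\eta_i w_h),v)=(\mLh w_h,\eta_i v)-2\bigl((A\grad\eta_i)\cdot\grad w_h,\,v\bigr)-\bigl(w_h\,(A:D^2\eta_i),\,v\bigr).
\]
Taking $v=\eta_i w_h$ and summing over $i$: the first terms collapse to the finite-element pairing $(\mLh w_h,w_h)\le\|\mLh w_h\|_{H_h^{-1}(\W)}\,\|\grad w_h\|_{L^2(\W)}$; the first-order terms \emph{vanish identically} because $\sum_i\eta_i(A\grad\eta_i)=A\,\tfrac12\grad\bigl(\sum_i\eta_i^2\bigr)=0$; and the zeroth-order terms sum to $\bigl(w_h^2,\,A:\sum_i\eta_iD^2\eta_i\bigr)$, bounded by $R_\d^{-2}\|w_h\|_{L^2(\W)}^2$. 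Collecting the estimates,
\[
\|\grad w_h\|_{L^2(\W)}^2\lss\|\mLh w_h\|_{H_h^{-1}(\W)}\,\|\grad w_h\|_{L^2(\W)}+\|w_h\|_{L^2(\W)}^2+\e\,\|\grad w_h\|_{L^2(\W)}^2 ,
\]
with $\e$ controllable; choosing the Young/absorption parameters small (they depend only on $N$, hence only on $A$) moves the last term and the cross term to the left side, and taking square roots yields \eqref{eqnglobalgarding}.

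The step I expect to be decisive is precisely the commutator cancellation in the summation; it is the reason one may only get a G\aa rding estimate at this stage (a full $H^1$-stability estimate will need a further compactness/contradiction step to remove $\|w_h\|_{L^2(\W)}$). It is essential to test $\mLh(\eta_i w_h)$ against $\eta_i w_h$ specifically, and to sum before estimating, so that the first-order commutator term is eliminated rather than absorbed. Everything else — the interpolation errors $\eta_i w_h-\phi_i$ and the extra factors $\eta_i$ inside pairings such as $(\mLh w_h,\eta_i v)$ — is routine given Lemmas~\ref{lemsuperapprox} and \ref{lemmahm1}, and always costs only $\|w_h\|_{L^2(\W)}$-order terms.
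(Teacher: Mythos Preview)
Your argument is correct, but it follows a genuinely different route from the paper.  The paper works ball by ball: it applies the local stability of Lemma~\ref{lemlocalcontinuous} to $I_h(\eta w_h)$ for a single cutoff $\eta$, derives the same commutator identity you use, and then---rather than summing to kill the first-order term---handles $I_3=(-2A\grad\eta\cdot\grad w_h,v_h)$ by a pointwise sign decomposition $w_h=w_h^+-w_h^-$, $v_h=v_h^+-v_h^-$ and elementwise integration by parts, which shifts the derivative onto $v_h$ and yields $|I_3|\lss R_1^{-1}\|w_h\|_{L^2(B_2)}\|\grad v_h\|_{L^2(B_2)}$.  This produces a truly local estimate $\|\grad w_h\|_{L^2(B_1)}\lss R_1^{-1}\|\mLh w_h\|_{H_h^{-1}(B_2)}+R_1^{-2}\|w_h\|_{L^2(B_2)}$, which is then summed over a finite cover.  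Your route (partition of squares, sum the pairings $(\mLh(\eta_i w_h),\eta_i w_h)$ \emph{before} estimating so that $\sum_i\eta_i\grad\eta_i=0$ annihilates the first-order commutator) is the standard PDE mechanism for G\aa rding inequalities and is arguably cleaner here: it bypasses the sign-decomposition trick entirely, and because after summation the test function is $w_h\in V_h$ itself, you also avoid the detour through the norm equivalence \eqref{normequiv} that the paper needs to bound $(\mLh w_h,\eta v_h)$.  What the paper's approach buys in return is a local inequality with ball-localized norms on both sides, which may be of independent interest; your argument goes directly to the global estimate.
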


\begin{proof} 
Let $x_0\in \W$ and let $h_*$, $R_1$, and $B_1$ be defined as in Lemma \ref{lemlocalcontinuous}. We first extend \eqref{eqnlocalcontinuous} to functions in $V_h$.  Let $w_h\in V_h$ and set $R_2=2R_1$ and $B_{R_2} := B_{R_2}(x_0)$. Let $\eta\in C^{\infty}(\W)$ be a cutoff function that satisfies
\begin{align} \label{eqnetaprop}
\quad\eta\big|_{B_1} = 1,\quad \eta\big|_{\W\setminus B_2} = 0,\quad \|\eta\|_{W^{m,\infty}} = \mathcal{O}(R_1^{-m})
\end{align}
for $m=0,1,2$.  
Note that $\eta w_h = w_h$ in $B_1$ and $I_h(\eta w_h)\in V_h(B_2)$.  By Lemmas 
 \ref{lemlocalcontinuous}, \ref{lemmahm1}, and \ref{lemsuperapprox} with $3h < R_1=d$, we obtain
\begin{align}\label{eqn3.2.4}
&\|\grad w_h \|_{L^2(B_1)} = \|\grad(\eta w_h)\|_{L^2(B_1)} \\
&\quad \leq \|\grad(\eta w_h-I_h(\eta w_h))\|_{L^2(B_1)} + \|\grad I_h(\eta w_h)\|_{L^2(B_1)}  \nonumber\\
&\quad \lss \frac{1}{R_1}\|w_h\|_{L^2(B_1)} + \|\grad I_h(\eta w_h)\|_{L^2(B_2)} \nonumber \\
&\quad \lss \frac{1}{R_1}\|w_h\|_{L^2(B_2)} + \|\mLh I_h(\eta w_h)\|_{H_h^{-1}(B_2)} \nonumber \\
&\quad \lss \frac{1}{R_1}\|w_h\|_{L^2(B_2)} + \|\mLh (\eta w_h)\|_{H_h^{-1}(B_2)} + \|\mLh(\eta w_h- I_h(\eta w_h))\|_{H_h^{-1}(B_2)} \nonumber \\
&\quad \lss \frac{1}{R_1}\|w_h\|_{L^2(B_2)} + \|\mLh (\eta w_h)\|_{H_h^{-1}(B_2)} + \|\grad(\eta w_h - I_h(\eta w_h))\|_{L^2(B_2)} 
\nonumber\\
&\quad \lss \frac{1}{R_1}\|w_h\|_{L^2(B_2)} + \|\mLh (\eta w_h)\|_{H_h^{-1}(B_2)} + \frac{1}{R_1}\|w_h\|_{L^2(B_2)} \nonumber\\
&\quad \lss \frac{1}{R_1}\|w_h\|_{L^2(B_2)} + \|\mLh (\eta w_h)\|_{H_h^{-1}(B_2)}. \nonumber
\end{align}
We now must remove $\eta$ from the $\mLh$ term.  To do this, we directly manipulate the weak form.  Let $v_h\in V_h(B_2)\setminus\{0\}$.  Since $\eta$ and $\grad\eta$ are continuous across any edge $e\in\EI$, we have
\begin{align}\label{eqn3.2.5}
(\mLh (\eta w_h),v_h) &=   (-A:D^2(\eta w_h),v_h) + \sum_{e\in\EI}\left<A[\grad(\eta w_h)\dv],v_h\right>_e \\
&= -(\eta A:D^2w_h+2A\grad \eta\cdot\grad w_h+w_hA:D^2\eta,v_h)  \nonumber\\
& \qquad+ \sum_{e\in\EI}\left<A[\grad w_h\dv],\eta v_h\right>_e \nonumber\\
&= (-A:D^2 w_h,\eta v_h)+ \sum_{e\in\EI}\left<A[\grad w_h\dv],\eta v_h\right>_e \nonumber \\
&\qquad + (-w_hA:D^2\eta,v_h) \nonumber \\
&\qquad + (-2A\grad\eta\cdot\grad w_h,v_h) \nonumber \\
&= (\mLh w_h,\eta v_h) + (-w_hA:D^2\eta,v_h) + (-2A\grad\eta\cdot\grad w_h,v_h) \nonumber \\
&:=I_1 + I_2 + I_3. \nonumber
\end{align}
We seek to bound $I_1$ and $I_2$ and $I_3$ independently.  We start with $I_1$.  Note $\mLh w_h\in V_h\subset H^{-1}(B_2)$.  Thus by \eqref{eqnhm1-2},  \eqref{normequiv}, and the Poincar\'e inequality we have 
\begin{align} \label{eqn3.2.6}
I_1 &= (\mLh w_h,\eta v_h) \leq \|\mLh w_h\|_{H^{-1}(B_2)}\|\grad(\eta v_h)\|_{L^2(B_2)} \\ 
&\lss \|\mLh w_h\|_{H_h^{-1}(B_2)}\left(\|\grad\eta\|_{L^\infty(B_2)}\|v_h\|_{L^2(B_2)}+ \|\eta\|_{L^\infty(B_2)}\|\grad v_h\|_{L^2(B_2)} \right) \nonumber \\
&\lss \frac{1}{R_1}\|\mLh w_h\|_{H_h^{-1}(B_2)}\|\grad v_h\|_{L^2(B_2)}. \nonumber
\end{align}
For $I_2$, we may apply the Poincar\'e inequality to obtain\begin{align}\label{eqn3.2.9}
I_2 \lss \frac{1}{R_1^2}\|w_h\|_{L^2(B_2)}\|\grad v_h\|_{L^2(B_2)}.
\end{align}
For $I_3$, using H\"older's inequality we have
\begin{align} \label{eqn3.2.10}
I_3 \lss \frac{1}{R_1}\|(\grad w_h)v_h\|_{L^1(B_2)}.
\end{align}
For $w\in H^1(B_2)$ and $v\in H_0^1(B_2)$, define $w^\pm$ by $w^+=\max\{0,w\}$ and $w^-=\max\{0,-w\}$ and $v^{\pm}$ similarly.  Since $w^\pm\in H^1(B_2)$ and $v^\pm\in H_0^1(B_2)$, we have
\begin{align}\label{eqn3.2.11}
(w_{x_i}^\pm,v^\pm)_{B_2} &= -(w^{\pm},v_{x_i}^\pm)_{B_2} \leq \|w^{\pm}\|_{L^2(B_2)}\|\grad v^\pm\|_{L^2(B_2)} \\
&\leq \|w\|_{L^2(B_2)}\|\grad v\|_{L^2(B_2)}. \nonumber
\end{align}
Since $|w|=w^+ + w^-$ and $|v|=v^+ + v^-$, \eqref{eqn3.2.11} implies 
\begin{align*}
\|(\grad w)v\|_{L^1(B_2)}\lss \|w\|_{L^2(B_2)}\|\grad v\|_{L^2(B_2)}.
\end{align*}
Thus
\begin{align} \label{eqn3.2.12}
I_3 \lss \frac{1}{R_1}\|w_h\|_{L^2(B_2)}\|\grad v_h\|_{L^2(B_2)}.
\end{align}
Hence, \eqref{eqn3.2.5}, \eqref{eqn3.2.6}, \eqref{eqn3.2.9}, and \eqref{eqn3.2.12} imply
\begin{align} \label{eqn3.2.13}
(\mLh (\eta w_h),v_h) \lss \left(\frac{1}{R_1}\|\mLh w_h\|_{H_h^{-1}(B_2)} + \frac{1}{R_1^2}\|w_h\|_{L^2(B_2)}\right)\|\grad v_h\|_{L^2(B_2)}
\end{align}
Dividing \eqref{eqn3.2.13} by $\|\grad v_h\|_{L^2(B_2)}$ and taking the supremum over all $v_h\in V_h(B_2)\setminus\{0\}$, we have 
\begin{align} \label{eqn3.2.14}
 \|\mLh (\eta w_h)\|_{H_h^{-1}(B_2)} \lss \frac{1}{R_1}\|\mLh w_h\|_{H_h^{-1}(B_2)} + \frac{1}{R_1^2}\|w_h\|_{L^2(B_2)}.
\end{align}
Therefore from \eqref{eqn3.2.4} and \eqref{eqn3.2.14} we obtain
\begin{align} \label{eqn3.2.15}
\|\grad w_h\|_{L^2(B_1)} \lss \frac{1}{R_1}\|\mLh w_h\|_{H_h^{-1}(B_2)} + \frac{1}{R_1^2}\|w_h\|_{L^2(B_2)}
\end{align}
for every $w_h\in V_h$.  We note that $R_1$ is not dependent on $h$, but the rather the continuity of $A$.  Thus we can cover $\overline{\W}$ with a finite number of balls and extend \eqref{eqn3.2.15} to a global estimate; namely,
\begin{align*}
\|\grad w_h\|_{L^2(\W)} \lss \|\mLh w_h\|_{H_h^{-1}(\W)} + \|w_h\|_{L^2(\W)}
\end{align*}
for all $w_h\in V_h$ and $h<h_*$ for some $h_*>0$ which is exactly \eqref{eqnglobalgarding}.  We point the reader to Lemma 3.4, Step 2 of \cite{AX:FN} for the details of the covering argument.  The proof is complete.
\end{proof}

We now wish to strip the $\|w_h\|_{L^2(\W)}$ term off of \eqref{eqnglobalgarding} to arrive at our $H^1\to H^{-1}$ 
stability result. We can easily do this for quadratic elements or greater since we know $\mLh$ is invertible for $r\geq 2$.  Here $r$ is the polynomial degree of $V_h^r=V_h$.  However we have not shown that $\mLh$ is invertible for linear elements.   

To continue we focus on the case $r=1$.   In this case $D_h^2w_h$ is identically zero, so we have 
\begin{align*}
(\mLh w_h,v_h) = \sum_{e\in\EI}\left< [A\grad w_h\dv],v_h \right>_e.
\end{align*}
To show $\mLh$ is invertible, we employ a nonstandard duality argument utilized in \cite{AX:FN,Feng2017SS,Feng2018SS}.  Define the discrete adjoint $\mLh^*:V_h\to V_h$ by 
\begin{align*}
(\mLh^*v_h,w_h) = (\mLh w_h,v_h) \quad\forall w_h,v_h\in V_h.   
\end{align*}
We note that since $V_h$ is finite dimensional, invertibility of $\mLh$ and $\mLh^*$ are equivalent.   In order to show $\mLh^*$ is invertible, we first show the following lemma.

\begin{lemma}\label{lemadjoint}
Let $r=1$.  There exists $h_{**}>0$ such that for any $h<h_{**}$ and $v_h\in V_h$ there holds  
\begin{align}\label{eqnadjstab}
\|\grad v_h\|_{L^2(\W)} \lss \|\mLh^*v_h\|_{H_h^{-1}(\W)}.
\end{align}
Moreover, both $\mLh^*$ and $\mLh$ are invertible on $V_h$.  
\end{lemma}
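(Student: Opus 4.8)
The plan is to obtain \eqref{eqnadjstab} by essentially repeating the whole development of Section \ref{sectionstab} for the adjoint operator $\mLh^*$, and then to deduce invertibility of $\mLh^*$ (hence of $\mLh$) from this stability estimate combined with the G\aa rding inequality already in hand. The key observation making this feasible is that for $r=1$ the operator $\mLh$ consists only of jump terms, and integration by parts on the edge contributions will reveal that $\mLh^*$ has a structure close enough to $\mLh$ (it is itself a non-divergence-form-type operator with the coefficient $A$, up to lower-order edge/mass corrections coming from moving derivatives off the test function) that Lemmas \ref{lemmahm1}, \ref{lemlocalclose}, and \ref{lemlocalcontinuous} all go through with $\mLh$ replaced by $\mLh^*$. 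Concretely, first I would write out $(\mLh^* v_h, w_h) = \sum_{e\in\EI}\langle[A\grad w_h\dv],v_h\rangle_e$ and, using that $v_h$ is piecewise linear (so $\grad v_h$ is piecewise constant) and integrating by parts element-wise, re-express this as a bounded bilinear form in $\grad v_h$ and $\grad w_h$ plus terms bounded by $\|v_h\|_{L^2}\|\grad w_h\|_{L^2}$; this gives the analogues of \eqref{eqnhm1-2}--\eqref{eqnhm1-1}, namely $\|\mLh^* v_h\|_{H_h^{-1}(D)}\lesssim\|\grad v_h\|_{L^2(\W)}$.

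Next I would establish the constant-coefficient comparison: define $\mLzh^*$ as the adjoint of $\mLzh$, note that since $\mA_0(\cdot,\cdot)$ is symmetric we actually have $\mLzh^* = \mLzh$, so Lemma \ref{lemhm1constantlocal} already gives the local constant-coefficient stability for $\mLzh^*$. Then, exactly as in Lemma \ref{lemlocalclose}, I would show $\|(\mLh^*-\mLzh^*)v_h\|_{H_h^{-1}(B_{R_\d})}\lesssim\d\|\grad v_h\|_{L^2(B_{R_\d})}$ by repeating the argument with $A-A_0$ in place of $A$ and using uniform continuity of $A$; the absorption argument of Lemma \ref{lemlocalcontinuous} then yields the local estimate $\|\grad v_h\|_{L^2(B_1)}\lesssim\|\mLh^* v_h\|_{H_h^{-1}(B_1)}$ for $v_h\in V_h(B_1)$. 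The covering/cutoff argument of Lemma \ref{lemglobalgarding}, applied to $\mLh^*$ and using the just-established adjoint analogue of Lemma \ref{lemmahm1} to handle the $\eta$-manipulation terms, produces the G\aa rding-type global bound
\begin{align*}
\|\grad v_h\|_{L^2(\W)} \lss \|\mLh^* v_h\|_{H_h^{-1}(\W)} + \|v_h\|_{L^2(\W)}.
\end{align*}

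The main obstacle is then the removal of the $\|v_h\|_{L^2(\W)}$ term, i.e. converting the G\aa rding inequality into the clean estimate \eqref{eqnadjstab}, and it is precisely here that the nonstandard duality argument enters (this is why linear elements cannot simply piggyback on the $r\ge 2$ theory). I would argue by contradiction: suppose \eqref{eqnadjstab} fails, so there is a sequence $v_h^{(k)}\in V_{h_k}$ with $\|\grad v_h^{(k)}\|_{L^2(\W)}=1$, $h_k\to 0$, and $\|\mLh^* v_h^{(k)}\|_{H_h^{-1}(\W)}\to 0$; by the G\aa rding bound the $L^2$-norms are bounded below away from zero, and one extracts a weak $H^1_0$ limit $v$. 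The crux is to identify the limiting equation: one shows $v$ solves (in a suitable weak sense) the homogeneous adjoint problem associated with $\mL$, which by the uniqueness theory for \eqref{prob} (stability estimate \eqref{eqn1.3} applied to the adjoint, available since $A$ symmetric and the adjoint PDE is of the same type) forces $v=0$; combined with a compactness/strong-$L^2$-convergence step this contradicts the lower bound on $\|v_h^{(k)}\|_{L^2}$. Passing to the limit in the jump-dominated form $(\mLh^* v_h, w_h)$ against a fixed smooth test function $\phi$ (with $w_h = I_h\phi$ and using Lemma \ref{lemsuperapprox}-type estimates to control interpolation errors, plus a discrete integration-by-parts to move the edge jumps onto derivatives of $\phi$) is the delicate part and will require care with the consistency of the jump terms. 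Once \eqref{eqnadjstab} is proved it gives injectivity of $\mLh^*$ on the finite-dimensional space $V_h$, hence bijectivity of $\mLh^*$, hence bijectivity of $\mLh$, completing the proof.
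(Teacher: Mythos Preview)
Your Steps 1 and 2 (local constant-coefficient estimate via $\mLzh^* = \mLzh$, perturbation by $A - A_0$, and the G\aa rding inequality via cutoffs and covering) match the paper's argument closely. One simplification: you need not ``integrate by parts on the edge contributions'' to expose the structure of $\mLh^*$; the boundedness $\|\mLh^* v_h\|_{H_h^{-1}(D)} \lesssim \|\grad v_h\|_{L^2}$ follows immediately from the bilinear bound \eqref{eqn3.2.1.15} by writing $(\mLh^* v_h, w_h) = (\mLh w_h, v_h)$, which is exactly what the paper does.

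Your Step 3, however, takes a different route from the paper and contains a genuine gap in its justification. The paper does \emph{not} run a contradiction argument with $h_k \to 0$; instead it argues directly at fixed small $h$ via duality with the \emph{primal} operator $\mL$. For $g$ on the $L^2$ unit sphere one sets $w_g = \mL^{-1}g \in H^2 \cap H_0^1$ and uses consistency of $\mLh$ on $H^2$ functions to write
\[
(v_h, g) = (\mLh w_g, v_h) = (\mLh^* v_h, w_g) = (\mLh^* v_h, w_h) + (\mLh(w_g - w_h), v_h),
\]
where $w_h \in V_h$ approximates $w_g$ in $H^1$ uniformly over the precompact set $\mL^{-1}(\{\|g\|_{L^2}=1\}) \subset H_0^1$ via the Schatz--Wang lemma. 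This gives $\|v_h\|_{L^2} \lesssim \|\mLh^* v_h\|_{H_h^{-1}} + \epsilon \|\grad v_h\|_{L^2}$ and combines with the G\aa rding bound to yield \eqref{eqnadjstab}.

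Your contradiction approach is salvageable, but your stated reason for $v = 0$ is wrong: the formal adjoint of $\mL u = -A{:}D^2 u$ is $v \mapsto -\sum_{i,j} \partial_i\partial_j(a_{ij} v)$, which is \emph{not} a non-divergence operator and is not covered by \eqref{eqn1.3}; symmetry of $A$ does not make $\mL$ self-adjoint. What would actually force $v = 0$ is surjectivity of $\mL: H^2 \cap H_0^1 \to L^2$: once your limit argument gives $(v, \mL\phi) = 0$ for all smooth $\phi$ (via $(\mLh^* v_h^{(k)}, I_{h_k}\phi) \to 0$ together with $(\mLh I_{h_k}\phi, v_h^{(k)}) \to (\mL\phi, v)$), density yields $(v, g) = 0$ for every $g \in L^2$, hence $v = 0$. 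The paper's route is cleaner precisely because it builds the use of $\mL^{-1}$ in from the start and never needs to invoke any property of the continuous adjoint $\mL^*$.
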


\begin{proof}
 We divide the proof into three steps. 

{\em Step 1: Local Estimates.} Let $x_0\in \W$ and $\d>0$.  Set $R_1$, $h_*$, and $B_1$ as in Lemma \ref{lemglobalgarding}.  Note $\mLzh^*=\mLzh$ since $\mLzh$ is self-adjoint. Let $v_h\in V_h(B_1)$.  By Lemma \ref{lemhm1constantlocal} and \eqref{eqn3.2.1.15} with coefficient matrix $A_0-A$ we have 
\begin{align}
\|\grad v_h\|_{L^2(B_1)} &\lss \|\mLzh^*v_h\|_{H_h^{-1}(B_1)} \\
&\leq \|(\mLzh^*-\mLh^*)v_h\|_{H_h^{-1}(B_1)} + \|\mLh^*v_h\|_{H_h^{-1}(B_1)} \nonumber \\
&\lss \sup_{w_h\in V_h\setminus\{0\}} \frac{((\mLzh^*-\mLh^*)v_h,w_h)}{\|\grad w_h\|_{L^2(B_1)}} + \|\mLh^*v_h\|_{H_h^{-1}(B_1)} \nonumber \\
&\lss \sup_{w_h\in V_h\setminus\{0\}} \frac{(\mLzh-\mLh) w_h,v_h)}{\|\grad w_h\|_{L^2(B_1)}} + \|\mLh^*v_h\|_{H_h^{-1}(B_1)} \nonumber \\
&\lss \sup_{w_h\in V_h\setminus\{0\}} \frac{\d \|\grad w_h\|_{L^2(B_1)}\|\grad v_h\|_{L^2(B_1)}}{\|\grad w_h\|_{L^2(B_1)}} + \|\mLh^*v_h\|_{H_h^{-1}(B_1)} \nonumber \\
&\lss \delta\|\grad v_h\|_{L^2(B_1)}+ \|\mLh^*v_h\|_{H_h^{-1}(B_1)}. \nonumber
\end{align}
Thus we can choose $\d$, independent of $h$, sufficiently small to achieve
\begin{align} \label{eqn3.2.20}
\|\grad v_h\|_{L^2(B_1)} \lss \|\mLh^*v_h\|_{H_h^{-1}(B_1)}
\end{align}

{\em Step 2: G\aa rding Inequality.} We now seek to replicate Lemma \ref{lemglobalgarding} for $\mLh^*$. Let $v_h\in V_h$.   Set $R_2$, $B_2$, and $\eta\in C^{\infty}$ as in Lemma \ref{lemglobalgarding}.  Then by Lemmata \ref{lemsuperapprox} and  \ref{lemmahm1} and \eqref{eqn3.2.20} we have
\begin{align} \label{eqn3.2.21}
&\|\grad v_h\|_{L^2(B_1)} = \|\grad (\eta v_h)\|_{L^2(B_1)} \\
&\quad \leq  \|\grad (\eta v_h)-\grad I_h(\eta v_h)\|_{L^2(B_1)} + \|\grad I_h(\eta v_h)\|_{L^2(B_2)}  \nonumber\\
&\quad\lss \frac{1}{R_1} \|v_h\|_{L^2(B_1)} +  \|\mLh^* I_h(\eta v_h)\|_{H_h^{-1}(B_2)} \nonumber \\
&\quad\lss \frac{1}{R_1} \|v_h\|_{L^2(B_2)} +  \|\mLh^*( I_h(\eta v_h)-\eta v_h)\|_{H_h^{-1}(B_2)} +  \|\mLh^* (\eta v_h)\|_{H_h^{-1}(B_2)} \nonumber \\
&\quad\lss \frac{1}{R_1} \|v_h\|_{L^2(B_2)} +  \|\grad (\eta v_h)-\grad I_h(\eta v_h)\|_{L^2(B_2)} +  \|\mLh^* (\eta v_h)\|_{H_h^{-1}(B_2)} \nonumber \\
&\quad\lss \frac{1}{R_1} \|v_h\|_{L^2(B_2)} + \|\mLh^* (\eta v_h)\|_{H_h^{-1}(B_2)}. \nonumber
\end{align}
Let $w_h\in V_h(B_2)\setminus\{0\}$.  Then
\begin{align} \label{eqn3.2.22}
(\mLh^*(\eta v_h),w_h) = (\mLh w_h,\eta v_h) = \sum_{e\in\EI}\left<[A\grad w_h\dv],\eta v_h\right>.
\end{align}  
Since $\grad\eta, \eta,$ and $w_h$ are continuous across any edge $e\in\EI$, then $[\grad w_h]\eta = [\grad (\eta w_h)]$.  Thus \eqref{eqn3.2.22} implies
\begin{align}\label{eqn3.2.23}
(\mLh^*(\eta v_h),w_h) &= \sum_{e\in\EI}\left<[A\grad (\eta w_h)\dv], v_h\right> = (\mLh (\eta w_h),v_h) = (\mLh^* v_h,\eta w_h) \\
&\lss \|\mLh^* v_h\|_{H^{-1}(B_2)}\|\grad(\eta w_h)\|_{L^2(B_2)}  \nonumber\\
&\lss \frac{1}{R_1}\|\mLh^* v_h\|_{H_h^{-1}(B_2)}\|\grad w_h\|_{L^2(B_2)}.\nonumber
\end{align}
Hence using \eqref{eqn3.2.21} and \eqref{eqn3.2.23} we obtain
\begin{align}\label{eqn3.2.24}
\|\grad v_h\|_{L^2(B_1)} \lss \frac{1}{R_1}\|\mLh^* v_h\|_{H_h^{-1}(B_2)} + \frac{1}{R_1} \|v_h\|_{L^2(B_2)}.
\end{align}
We note that $R_1$ depends on the continuity of $A$ and not $h$.  Thus using a covering argument we can extend \eqref{eqn3.2.24} to a G\aa rding-type inequality on $\W$; namely, 
\begin{align}\label{eqn3.2.25}
\|\grad v_h\|_{L^2(\W)} \lss \|\mLh^* v_h\|_{H_h^{-1}(\W)} + \|v_h\|_{L^2(\W)}.
\end{align}

{\em Step 2: Non-standard Duality Argument.}  We now perform a duality argument on $\mLh^*$ using $\mL$. Let 
\begin{align*}
X = \{g\in L^2(\W): \|g\|_{L^2(\W)} = 1\}.
\end{align*}
We note that $X$ is precompact in $H^{-1}(\W)$.  Define $W\subset H^2(\W)\cap H_0^1(\W)$ by 
\begin{align*}
W = \{\mL^{-1}g:g\in X\}.
\end{align*}
By \eqref{eqn1.3}, $\mL^{-1}$ is a continuous map from $H^{-1}(\W)$ to $H_0^1(\W)$, thus $W$ is precompact in $H_0^1(\W)$.  Let $\e>0$.  Then by Lemma 2 of \cite{MC:SW}, there exists $h_2=h_2(\e,W)>0$ such that for any $w\in W$ and $h\leq h_2$ there is a $w_h\in V_h$ such that 
\begin{align} \label{eqn3.2.26}
\|w-w_h\|_{H^1(\W)} \leq \e.  
\end{align}
By the reverse triangle equality and \eqref{eqn1.3}, we have 
\begin{align*}
\|\grad w_h\|_{L^2(\W)} \leq \|\grad w\|_{L^2(B_1)} + \e \lss \|g\|_{H^{-1}(\W)} + \e \leq \|g\|_{L^2(\W)} + \e \lss 1
\end{align*}

For $g\in X$, set $w_g = \mL^{-1}g\in W$.  Therefore by Lemma \ref{lemmahm1}, for any $w_h\in V_h$ satisfying \eqref{eqn3.2.26} we have
\begin{align} \label{eqn3.2.27}
(v_h,g) &= (\mLh w_g,v_h) = (\mLh^*v_h,w_g) = (\mLh^*v_h,w_h) + (\mLh^*v_h,w_g-w_h)  \\
&= (\mLh^*v_h,w_h) + (\mLh(w_g-w_h),v_h)  \nonumber\\
&\lss \|\mLh^*v_h\|_{H_h^{-1}(\W)} \|\grad w_h\|_{L^2(\W)} + \|w_g-w_h\|_{H^1(\W)}\|\grad v_h\|_{L^2(\W)} \nonumber \\
&\lss \|\mLh^*v_h\|_{H_h^{-1}(\W)} +\e\|\grad v_h\|_{L^2(\W)}. \nonumber
\end{align}
Taking the supremum of \eqref{eqn3.2.27} over all $g\in X$ yields
\begin{align} \label{eqn3.2.28}
\|v_h\|_{L^2(\W)} \lss  \|\mLh^*v_h\|_{H_h^{-1}(\W)} +\e\|\grad v_h\|_{L^2(\W)}.
\end{align}
Thus by taking $\e$, independent of $h$, sufficiently small and setting $h_{**} = \min\{h_*,h_2\}$ we combine \eqref{eqn3.2.25} and \eqref{eqn3.2.28} to obtain 
\begin{align*}
\|\grad v_h\|_{L^2(\W)} \lss \|\mLh^*v_h\|_{H_h^{-1}(\W)}.
\end{align*}
which is \eqref{eqnadjstab}.  To show that $\mLh^*$ is invertible, we see if $\mLh^*v_h=0$, then \eqref{eqnadjstab} immediately implies $\|\grad v_h\|_{L^2(\W)} = 0$ which can only happen if $v_h = 0$.  Therefore  $\mLh^*$ is injective and thus invertible since $V_h$ is finite dimensional.  Moreover $\mLh$ is also invertible.  
The proof is complete.
\end{proof}

We can now strip the $\|w_h\|_{L^2(\W)}$ term off of \eqref{eqnglobalgarding} which will yeild our $H^1$ stability result.  To do so, we apply a proof by contradiction technique found in \cite[Lemma 9.17]{Sp:DT}.  
\begin{theorem}\label{thmmainstab}
There exists $h_{**}>0$ such that 
\begin{align} \label{eqnh1stab}
\|\grad w_h\|_{L^2(\W)} \lss \|\mLh w_h\|_{H_h^{-1}(\W)}
\end{align}
for all $h<h_{**}$ and $w_h\in V_h$. 
\end{theorem}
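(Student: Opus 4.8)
The plan is to argue by contradiction, upgrading the G\aa rding-type inequality \eqref{eqnglobalgarding} to the clean estimate \eqref{eqnh1stab} by a compactness argument anchored by the continuous-level stability \eqref{eqn1.3}, in the spirit of \cite[Lemma 9.17]{Sp:DT}. Were \eqref{eqnh1stab} false, then for every $n\in\N$ there would exist $h_n<1/n$ and $0\neq w_n\in V_{h_n}$ with $\|\grad w_n\|_{L^2(\W)}>n\,\|\mLh w_n\|_{H_h^{-1}(\W)}$, where throughout $\mLh$ denotes the operator built on the mesh $\mathcal{T}_{h_n}$. Rescaling so that $\|\grad w_n\|_{L^2(\W)}=1$ we obtain
\begin{align*}
\|\grad w_n\|_{L^2(\W)}=1 \quad\text{and}\quad \|\mLh w_n\|_{H_h^{-1}(\W)}\to 0,
\end{align*}
and, since $h_n<1/n$, automatically $h_n\to 0$; discarding finitely many terms we may assume $h_n<h_*$, so that the G\aa rding inequality of Lemma \ref{lemglobalgarding} applies to every $w_n$.

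The first step is to extract a limit. By Poincar\'e's inequality $\{w_n\}$ is bounded in $H_0^1(\W)$, so along a subsequence $w_n\wto w$ weakly in $H_0^1(\W)$ and, by the Rellich--Kondrachov theorem, $w_n\to w$ strongly in $L^2(\W)$. Substituting the normalization into \eqref{eqnglobalgarding} gives $1\lss\|\mLh w_n\|_{H_h^{-1}(\W)}+\|w_n\|_{L^2(\W)}$, so $\|w\|_{L^2(\W)}=\lim_n\|w_n\|_{L^2(\W)}\gss 1$; in particular $w\neq 0$. The heart of the argument is then to show $w=0$, which yields the contradiction.

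To that end, I would pass to the limit in $\mLh w_n$. Since $\mLh w_n\in V_h=V_h(\W)$, estimate \eqref{normequiv} gives $\|\mLh w_n\|_{H^{-1}(\W)}\lss\|\mLh w_n\|_{H_h^{-1}(\W)}\to 0$, i.e.\ $\mLh w_n\to 0$ strongly in $H^{-1}(\W)$. By Lemma \ref{lemmahm1} --- precisely the identity \eqref{eqn3.2.1.14}, which shows that on $H_h^2(\W)$ the discrete operator $\mLh$ coincides with the bounded linear extension $\mL':H_0^1(\W)\to H^{-1}(\W)$ of the PDE operator --- we have $(\mLh w_n,v)=(\mL'w_n,v)$ for every $v\in H_0^1(\W)$. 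Because $\mL'$ is a single, mesh-independent bounded linear operator, it is weak-to-weak continuous, so $w_n\wto w$ forces $\mL'w_n\wto\mL'w$ in $H^{-1}(\W)$; comparing with $\mLh w_n\to 0$ gives $\mL'w=0$. Finally $\mL'$ is the unique continuous extension of $\mL$ from $H^2(\W)\cap H_0^1(\W)$ to $H_0^1(\W)$, hence the very operator for which \eqref{eqn1.3} holds (see \cite{Feng2018SS}); therefore $\|w\|_{H^1(\W)}\lss\|\mL'w\|_{H^{-1}(\W)}=0$, so $w=0$, contradicting $w\neq 0$. This proves \eqref{eqnh1stab}; as a byproduct, $\mLh$ is injective, hence invertible, on $V_h$ for every $h<h_{**}$, which also covers the linear element case.

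The step I expect to be the main obstacle is the limit passage above: one must use Lemma \ref{lemmahm1} to view the mesh-dependent discrete operator $\mLh$, restricted to $H_0^1(\W)$, as the restriction of one fixed bounded operator $\mL':H_0^1(\W)\to H^{-1}(\W)$ independent of $h$ (so that weak convergence of $w_n$ transfers to weak convergence of $\mLh w_n$) and to identify that limiting operator with the continuous $\mL$ controlled by \eqref{eqn1.3}. The remaining ingredients --- the $H_0^1(\W)$ bound, the compact embedding into $L^2(\W)$, and extracting $\|w\|_{L^2(\W)}>0$ from the G\aa rding inequality --- are routine.
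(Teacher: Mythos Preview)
Your approach is genuinely different from the paper's and in some ways cleaner. The paper runs the contradiction inside a \emph{fixed} $V_h$: a bad sequence $w_h^k\in V_h$ is passed to a limit $w_h^*\in V_h$ by finite-dimensionality, and $w_h^*=0$ follows from the \emph{discrete} invertibility of $\mLh$, which for $r=1$ had to be supplied separately by the nonstandard duality argument of Lemma~\ref{lemadjoint}. You instead send $h_n\to 0$, pass to a limit $w\in H_0^1(\W)$, and annihilate it with the \emph{continuous} stability \eqref{eqn1.3}. This bypasses Lemma~\ref{lemadjoint} entirely, delivers invertibility for $r=1$ as a corollary rather than a prerequisite, and makes the $h$-uniformity of the constant transparent.

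There is, however, a real slip in your limit passage. The paper overloads the symbol $\mLh$: as a map $V_h\to V_h$ in \eqref{eqnlh} (so that $\mLh w_h\in V_h$ is the element determined by $L^2$-testing against $V_h$ only), and as a map $H_h^2(\W)\to H^{-1}(\W)$ in \eqref{eqnlhdual} (the form tested against all of $H_0^1(\W)$). These two objects agree when paired with $v_h\in V_h$ but \emph{not} with a general $v\in H_0^1(\W)$: the former yields $(\mLh w_n,\mathcal{P}_{h_n}v)$, the latter $(\mLh w_n,v)$ in the sense of \eqref{eqnlhdual}. Your appeal to \eqref{normequiv} requires the $V_h$-valued reading, while the identity $\mLh=\mL'$ of \eqref{eqn3.2.1.14} concerns the $H^{-1}$-valued one; hence ``$\mLh w_n\to 0$ in $H^{-1}(\W)$'' and ``$\mLh w_n=\mL' w_n$'' refer to different objects and cannot be combined as written. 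The repair stays entirely within your strategy: drop \eqref{normequiv} and work directly on the form. For $v\in C_c^\infty(\W)$ use \eqref{eqn3.2.1.14} to write
\[
\langle\mL' w_n,v\rangle=(\mLh w_n,I_{h_n}v)+(\mLh w_n,v-I_{h_n}v);
\]
the first term is bounded by $\|\mLh w_n\|_{H_{h_n}^{-1}(\W)}\|\nabla I_{h_n}v\|_{L^2(\W)}\to 0$, and the second is $\lss\|\nabla w_n\|_{L^2(\W)}\|\nabla(v-I_{h_n}v)\|_{L^2(\W)}\to 0$ by \eqref{eqn3.2.1.15} and $h_n\to 0$. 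Density then gives $\mL' w=0$, and the remainder of your argument goes through unchanged.
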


\begin{proof}
Let $r \geq 1$ and choose $h_{**}=h_*>0$ as in Lemma \ref{lemglobalgarding} for $r\geq 2$ or $h_{**}$ as in Lemma  \ref{lemadjoint} for $r=1$.  Suppose for the sake of contradiction there is a sequence of $w_h^k\in V_h$ such that $\|w_h^k\|_{L^2(\W)} = 1$ and $\|\mLh w_h^k\|_{H_h^{-1}(\W)}\to 0$ as $k\to \infty$.  By Lemma \ref{lemglobalgarding}, we have $\|w_h^k\|_{H^1(\W)}$ uniformly bounded in $k$.  Since $V_h$ is finite dimensional, there exists $w_h^*\in V_h$ such that $w_h^k\wto w_h^*$ weakly in $H^1(\W)$.  Thus $w_h^k\to w_h^*$ strongly in $L^2(\W)$ and $\|w_h^*\|_{L^2(\W)}=1$.  Since $\mLh$ is linear, we also have 
\begin{align*}
0\leq \|\mLh w_h^*\|_{H_h^{-1}(\W)}\leq \liminf_{k\to\infty}\|\mLh w_h^k\|_{H_h^{-1}(\W)}  = 0.
\end{align*}
Thus $\|\mLh w_h^*\|_{H_h^{-1}(\W)} = 0$ and from that we know $\mLh w_h^*= 0$.  Since $\mLh$ is invertible on $V_h$ for $r\geq 2$ by \cite{AX:FN} and for $r=1$ by Lemma \ref{lemadjoint}, $w_h^*=0$ which contradicts $\|w_h^*\|_{L^2(\W)}=1$.  Thus \eqref{eqnh1stab} holds for $r\geq 1$.  
The proof is complete.  
\end{proof}

Using \eqref{eqnh1stab}, we can build a Ce\'a-type lemma and thus an optimal error estimate 
for $\|u-u_h\|_{H^1(\W)}$.

\begin{theorem}
Let $u_h \in V_h$ and  $u\in H^2(\W)\cap H_0^1(\W)$  be the solutions to \eqref{eqnc1fe} and \eqref{eqn1.1} respectively.  Then there holds 
\begin{align}\label{eqnceah1}
\|u-u_h\|_{H^1(\W)} \lss \inf_{w_h\in V_h}\|u-w_h\|_{H^1(\W)}.
\end{align}
Moreover, if $u\in H^s(\W)$ for $s\geq 2$ we have
\begin{align}
\|u-u_h\|_{H^1(\W)} &\lss h^l\|u\|_{H^s(\W)}  \label{eqnerror1}
\end{align}
for $l=\min\{r+1,s\}$.
\end{theorem}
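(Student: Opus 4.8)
The plan is to obtain the Ce\'a-type estimate \eqref{eqnceah1} directly from the discrete $H^1$-norm stability estimate of Theorem \ref{thmmainstab}, combined with the consistency of the scheme \eqref{eqnc1fe} and the $H^1\!\to H_h^{-1}$ boundedness of $\mLh$ recorded in Lemma \ref{lemmahm1}; the convergence rate \eqref{eqnerror1} then follows by inserting the nodal interpolant $I_h u$ and quoting standard finite element approximation theory.

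First I would fix an arbitrary $w_h\in V_h$ and split $u-u_h=(u-w_h)+(w_h-u_h)$, observing that $w_h-u_h\in V_h$. Applying Theorem \ref{thmmainstab} to $w_h-u_h$ gives
\[
\|\grad(w_h-u_h)\|_{L^2(\W)}\lss\|\mLh(w_h-u_h)\|_{H_h^{-1}(\W)}.
\]
Next I would establish the consistency identity $(\mLh u_h,v_h)=(\mLh u,v_h)$ for every $v_h\in V_h$: by \eqref{eqnc1fe} and \eqref{eqn1.1} one has $(\mLh u_h,v_h)=(f,v_h)=(-A:D^2u,v_h)$, whereas since $u\in H^2(\W)$ the broken Hessian $D_h^2u$ coincides with $D^2u$ and the jump $[A\grad u\dv]$ vanishes on every $e\in\EI$, so the extension \eqref{eqnlhdual} collapses to $(\mLh u,v_h)=(-A:D^2u,v_h)$. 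Hence $\mLh(w_h-u_h)$ and $\mLh(w_h-u)$ agree as functionals on $V_h=V_h(\W)$, and since $w_h-u\in H_h^2(\W)$, estimate \eqref{eqnhm1-1} of Lemma \ref{lemmahm1}, used in its discrete form (supremum over $V_h$), yields
\[
\|\mLh(w_h-u_h)\|_{H_h^{-1}(\W)}=\|\mLh(w_h-u)\|_{H_h^{-1}(\W)}\lss\|\grad(w_h-u)\|_{L^2(\W)}.
\]
Chaining the two displays and applying the Poincar\'e inequality (legitimate because $w_h-u_h\in H_0^1(\W)$) gives $\|w_h-u_h\|_{H^1(\W)}\lss\|u-w_h\|_{H^1(\W)}$; the triangle inequality then bounds $\|u-u_h\|_{H^1(\W)}$ by a constant times $\|u-w_h\|_{H^1(\W)}$, and taking the infimum over $w_h\in V_h$ produces \eqref{eqnceah1}. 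For \eqref{eqnerror1} I would specialize to $w_h=I_h u$ in \eqref{eqnceah1} and invoke the standard Lagrange interpolation error estimates for degree $r$ elements, valid for $u\in H^s(\W)$ with $s\geq 2$, to conclude.

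There is no deep obstacle left here, since Theorem \ref{thmmainstab} already carries the analytical weight; the two points that require attention are the consistency identity --- where one must genuinely exploit $u\in H^2(\W)$ to eliminate both the broken-Hessian discrepancy and the interelement jump contributions in $\mLh$ --- and the bookkeeping of norms, namely that Lemma \ref{lemmahm1} must be used in the discrete form \eqref{eqnhm1-1} (testing only against $V_h(\W)$, not all of $H_0^1(\W)$) so that the right-hand side of the stability estimate \eqref{eqnh1stab} is exactly the quantity that is controlled by $\|\grad(w_h-u)\|_{L^2(\W)}$.
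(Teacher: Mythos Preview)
Your proposal is correct and follows essentially the same route as the paper: Galerkin orthogonality from consistency (using $u\in H^2(\W)$ to kill the jump terms), the discrete $H^1$ stability of Theorem \ref{thmmainstab} applied to $w_h-u_h\in V_h$, then the $H^1\to H_h^{-1}$ bound of Lemma \ref{lemmahm1} on $\mLh(w_h-u)$, triangle inequality, and finally interpolation with $w_h=I_h u$. Your care in noting that the discrete form \eqref{eqnhm1-1} is the one actually needed (since $\|\cdot\|_{H_h^{-1}(\W)}$ tests only against $V_h$) is well placed; the paper's proof cites \eqref{eqnhm1-2} at that step, but either suffices since $V_h\subset H_0^1(\W)$.
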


\begin{proof}
Let $w_h\in V_h$.  Since $\mLh$ is consistent, we have the usual Galerkin orthogonality; namely,
\begin{align} \label{eqn3.2.17}
(\mLh (u-u_h),v_h) = (\mLh u,v_h) - (\mLh u_h,v_h) = (f,v_h)-(f,v_h) = 0
\end{align}  
for any $v_h\in V_h$.  
By \eqref{eqnh1stab}, \eqref{eqnhm1-2}, and \eqref{eqn3.2.17} we have
\begin{align} \label{eqn3.2.18}
\|\grad(u_h-w_h)\|_{L^2(\W)} &\lss \|\mLh(u_h-w_h)\|_{H_h^{-1}(\W)}= \sup_{v_h\in V_h}\frac{(\mLh (u_h-w_h),v_h)}{\|\grad v_h\|_{L^2(\W)}} \\
&\lss \sup_{v_h\in V_h}\frac{(\mLh (u-w_h),v_h)}{\|\grad v_h\|_{L^2(\W)}} 
\lss \|\grad(u-w_h)\|_{L^2(\W)}. \nonumber
\end{align} 
\eqref{eqnceah1} then follows from an application of the triangle inequality and using \eqref{eqn3.2.18}. 
 Choosing $w_h=I_h u$ and using the standard interpolation estimates we obtain \eqref{eqnerror1}. The proof is complete.  
\end{proof}


\pagestyle{myheadings}
\thispagestyle{plain}

\bibliographystyle{abbrv}

\begin{thebibliography}{10}
\bibitem{AM:CG}
L.~A. Caffarelli and C.~E. Guti{\'e}rrez.
\newblock Properties of the solutions of the linearized {M}onge-{A}mpere
  equation.
\newblock {\em American J. Math.}, pages 423--465, 1997.

\bibitem{Crandall_Ishii_Lions}
M.~G. Crandall, H.~Ishii, and P.-L. Lions.
\newblock User's guide to viscosity solutions of second order partial
  differential equations.
\newblock {\em Bulletin AMS}, 27(1):1--67,
  1992.
  
\bibitem{crouzeix1987stability}
M.~Crouzeix and V.~Thom{\'e}e.
\newblock The stability in $L_p$ and $W_p^1$ of
  the $L_2$-projection onto finite element function spaces.
\newblock {\em Math. Comp.}, 48(178):521--532, 1987.

\bibitem{AX:FN}
X.~Feng, L.~Hennings, and M.~Neilan.
\newblock Finite element methods for second order linear elliptic partial
  differential equations in non-divergence form.
\newblock {\em Math. Comp.}, 86:2025--2051, 2017.

\bibitem{Feng2018SS}
X.~Feng, T.~Lewis, and S.~Schnake.
\newblock Analysis of the vanishing moment method and its finite element approximations for second-order 
linear elliptic PDEs in non-divergence form.   
\newblock {\em Methods Appli. Anal.}, 74(3):1651--1676, 2019. 

\bibitem{Feng2017SS}
X.~Feng, M.~Neilan, and S.~Schnake.
\newblock Interior penalty discontinuous Galerkin methods for second order
  linear non-divergence form elliptic PDEs.
\newblock {\em J. Scient. Comput. }, 74(3):1651--1676, 2018. 

\bibitem{SP:Flem}
W.~H. Fleming and H.~M. Soner.
\newblock {\em Controlled {M}arkov {P}rocesses and {V}iscosity {S}olutions}.
\newblock Springer, Berlin, 2010.

\bibitem{Sp:DT}
D.~Gilbarg and N.~S. Trudinger.
\newblock {\em Elliptic {P}artial {D}ifferential {E}quations of {S}econd
  {O}rder}.
\newblock Classics in Mathematics. Springer, Berlin, 2001.

\bibitem{NochettoZhang16}
R.~H. Nochetto and W.~Zhang.
\newblock Discrete {ABP} estimate and convergence rates for linear elliptic
  equations in non-divergence form.
\newblock {\em Found. Comput. Math.}, 18(3):537--593, 2018.

\bibitem{MC:SW}
A.~H. Schatz and J. Wang.
\newblock Some new error estimates for {R}itz-{G}alerkin methods with minimal
  regularity assumptions.
\newblock {\em Math. Comp.}, 65(213):19--27, 1996.

\bibitem{SmearsSuli}
I.~Smears and E.~S\"uli.
\newblock Discontinuous {G}alerkin finite element approximation of
  nondivergence form elliptic equations with {C}ordes coefficients.
\newblock {\em SIAM J. Numer. Anal.}, 51(4):2088--2106, 2013.

\bibitem{AX:WW}
C.~{Wang} and J.~{Wang}.
\newblock {A primal-dual weak {G}alerkin finite element method for second order
  elliptic equations in non-divergence form}.
\newblock {\em Math. Comp.}, 87(310):515--545 (2018).

\end{thebibliography}

\end{document}